\newtheorem{thm}{Theorem}[section]
\newtheorem{lem}[thm]{Lemma}
\def\rft#1{(\ref{#1}) }
\def\dfn#1{{\it #1}}
\newenvironment{proof}{\noindent{\it Proof} }{\hfill\rule{2mm}{2mm}\vspace{3mm}}
\begin{document}
\centerline{\bf Excluding A Grid Minor In Planar Digraphs}
\vfill
\centerline{Thor Johnson,}
\bigskip
\centerline{Neil Robertson,}
\bigskip
\centerline{Paul Seymour,}
\bigskip
\centerline{and}
\bigskip
\centerline{Robin Thomas}

\vfill
\section*{Abstract}

In~\cite{JohRobSeyThoDirtree}
we introduced the notion of tree-width of directed graphs and presented a conjecture,
formulated during discussions  with Noga Alon and Bruce Reed, stating that a digraph
of huge  tree-width has a large ``cylindrical  grid" minor.
Here  we prove the conjecture for planar digraphs, but many steps of the proof work in general.

This is an unedited and unpolished manuscript  from October 2001.
Since many people asked for copies we are making it available in the hope  that it may be useful.
The conjecture was proved by Kawarabayashi and Kreutzer in {\tt 	arXiv:1411.5681}.

%
\vfill
\pagebreak


\section{Definitions}

All graphs and digraphs in this paper are finite and allowed to have loops and
multiple edges.  We say that an edge of a digraph is directed from its \dfn{tail}
vertex to its \dfn{head} vertex.  An edge $e$ is an \dfn{out-edge} of its tail and
an \dfn{in-edge} of its head.  The \dfn{out-degree} of a vertex is the number of
of out-edges at the vertex and the \dfn{in-degree} is the number of in-edges.  Given
a graph or digraph $G$, $V(G)$ will denote the vertex set and $E(G)$ will denote the
edge set.  Given a digraph $D$ and $X\subseteq V(D)$, $D-X$ denotes the digraph obtained
from $D$ by deleting $X$ and all edges with a head or tail in $X$.  $D\vert X$ denotes
$D-(V(D)-X)$.  We write $\delta ^+(X)$ to mean the set of all edges with head in $X$ and
tail in $V(D)-X$.  $\delta ^-(X)$ denotes $\delta ^+(V(D)-X)$.\\

A \dfn{directed path} in $D$ is a subset of vertices and edges that form a
path in the undirected sense such that no vertex is either the head of two
edges in the path nor the tail.  We say that a directed path \dfn{starts} at
the vertex with in-degree $0$ and \dfn{finishes} at the vertex with out-degree $0$.
The start and finish of a directed path are called the \dfn{terminals} or \dfn{ends}
of the path.  A \dfn{directed circuit} is a directed path with the start vertex equal to
the finish.  A digraph is \dfn{planar} if the underlying graph is planar.\\

We say a digraph $D$ is \dfn{weakly connected} if the underlying graph is
connected.  $D$ is \dfn{strongly connected} if for every $u,v\in V(D)$,
there is a directed path from $u$ to $v$.  A \dfn{strongly connected component}
or \dfn{strong component} of $D$ is a maximal strongly connected subdigraph.\\

We recall that a graph $H$ is a \dfn{minor} of a graph $G$ if $H$ is obtained
from a subgraph of $G$ by contracting edges.  For digraphs, there are many possible
definitions of minor, but in this paper we will be concerned with ``butterfly" minors.
Given a digraph $D$, an edge $e\in E(D)$ is called \dfn{butterfly contractible} if $e$
is the only out-edge of its tail or the only in-edge of its head.  A digraph $H$ is a
\dfn{butterfly minor} (or simply \dfn{minor}) of $D$ if $H$ is obtained from a subdigraph
of $D$ by contracting butterfly contractible edges.\\

A strongly connected digraph $D$ is \dfn{$2$-eulerianizable} if, by adding
parallel edges (same head and tail vertices), we can make every vertex of $D$
have in-degree equal to out-degree and both at most $2$.  We define
\dfn{$4$-eulerianizable} and \dfn{$6$-eulerianizable} in a similar way.\\

For a digraph $D$ and non-negative integer $w$, a \dfn{haven} of order $w$ in $D$
is a function $\cal{B}$ which assigns to every set $Z\subseteq V(D)$ with $\vert
Z\vert <w$ a strong-component ${\cal B}(Z)$ of $D-Z$ such that the following property holds:\\
\begin{itemize}
\item{}If $Z'\subseteq Z\subseteq V(D)$ and $\vert Z\vert<w$, ${\cal B}(Z)\subseteq{\cal B}(Z')$
\end{itemize}
Throughout the paper, we will refer to the above property as the \dfn{haven axiom}.  We note
that if a digraph has a haven of order $w$, it also has a haven of order $w'$ for all
$w'\leq w$.\\

For a positive integer $n$, we construct the \dfn{cylindrical grid} of size $n$ beginning
with a family $C_1$, $C_2$, ..., $C_n$ of $n$ disjoint directed circuits of length $2n$.
We label the vertices on each circuit from $1$ to $2n$ consistent with the circular order
of the directed circuit.  For every $1\leq i\leq n-1$ and every $1\leq j\leq n$, we add an
edge from vertex $j$ on circuit $C_i$ to vertex $j$ on circuit $C_{i+1}$.  Finally, for
every $1\leq i\leq n-1$ and every $n+1\leq j\leq 2n$, we add an edge from vertex $j$ on
circuit $C_{i+1}$ to vertex $j$ on circuit $C_i$.\\


\section{Linkages}

Let $D$ be a digraph and $X\subseteq V(D)$.  We say that $X$ is \dfn{linked} if for
every $A,B\subseteq X$ with $\vert A\vert=\vert B\vert$, there are $\vert A\vert$
vertex-disjoint directed paths in $D$ from the vertices in $A$ to the vertices in $B$.\\

\begin{thm}\label{linked}
Let $n$ be a positive integer and $D$ a digraph with a haven ${\cal B}$ of order $3n$.  There
exists a linked $X\subseteq V(D)$ with $\vert X\vert=2n$.
\end{thm}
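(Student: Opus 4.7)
The plan is a contradiction argument via a potential function. Among all subsets $X \subseteq V(D)$ with $|X| = 2n$, I would choose one minimizing the deficiency
$$\phi(X) = \sum_{A,B \subseteq X,\; |A|=|B|} (|A|-\mu_D(A,B)),$$
where $\mu_D(A,B)$ denotes the maximum number of pairwise vertex-disjoint directed paths from $A$ to $B$ in $D$. If $\phi(X)=0$ then $X$ is linked and we are done; otherwise some pair $A,B\subseteq X$ with $|A|=|B|=k$ satisfies $\mu_D(A,B)<k$. The directed version of Menger's theorem then produces a vertex set $Z$ with $|Z|=\mu_D(A,B)<k\le 2n<3n$ meeting every directed path from $A$ to $B$, and since $|Z|<3n$, the haven yields a strong component $C={\cal B}(Z)$ of $D-Z$.

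Next, I would dissect $D-Z$. Let $L$ be the set of vertices reachable from $A\setminus Z$ in $D-Z$, let $R$ be the set of vertices that reach $B\setminus Z$ in $D-Z$, and let $M=V(D)\setminus(Z\cup L\cup R)$. Since $Z$ separates $A$ from $B$, the sets $L$ and $R$ are disjoint; and since any strong component of $D-Z$ that met both $L$ and $R$ would give a directed path from $A\setminus Z$ to $B\setminus Z$ avoiding $Z$, the component $C$ lies entirely inside one of $L$, $M$, $R$.

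The crux is to use $C$ to produce a vertex $v\in C$ and a vertex $x\in X$ such that $X'=(X\setminus\{x\})\cup\{v\}$ satisfies $\phi(X')<\phi(X)$, contradicting the choice of $X$. The slack between $2n$ and $3n$ is precisely what makes such a swap feasible: for any $S\subseteq X$ with $|S|\le n-1$ we still have $|Z\cup S|<3n$, so the haven assigns to $Z\cup S$ a strong component, which by the haven axiom is contained in $C$. Thus we may delete up to $n-1$ further vertices of $X$ from $D-Z$ without losing a strong ``core'' inside $C$, and in particular we can route many internally disjoint directed paths in $C$ among various subsets of $X$.

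The hard step, and the main obstacle, is the swap-and-reroute itself: a single vertex swap changes $\mu_D(A',B')$ for many pairs $(A',B')\subseteq X$ simultaneously, so one must show that the net effect on $\phi$ is strictly negative. My strategy is to pick $x\in X$ as a ``wasted'' vertex, for instance a vertex of $X\cap M$ or one lying on fewer than $|Z|$ of the Menger paths for the bad pair, and to pick $v\in C$ as a vertex providing a new $A$-to-$B$ path in $D$ after the swap; the max-flow systems for the remaining pairs are then repaired by short detours through $C$, whose existence is guaranteed by the haven axiom applied to $Z\cup S$ for suitable small $S\subseteq X$. Verifying that the repair leaves a strict net decrease of $\phi$ is the technical heart of the argument and the step that most needs care.
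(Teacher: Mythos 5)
Your proposal identifies the right ingredients---Menger's theorem, the haven, and the $2n$-vs-$3n$ slack---but it has a genuine gap at the step you yourself flag as the ``technical heart'': you never actually verify that a vertex swap can be chosen to strictly decrease $\phi$. That step is not merely ``needing care''; it is unclear it can be made to work at all. Your potential $\phi(X)$ aggregates $|A|-\mu_D(A,B)$ over exponentially many pairs $(A,B)$, and a single swap $X\mapsto (X\setminus\{x\})\cup\{v\}$ alters all pairs containing $x$ or $v$. Even if $v\in{\cal B}(Z)$ lets you repair the one bad pair, removing $x$ can break Menger systems for many other pairs, and the haven gives you no control over paths that must leave and re-enter the component ${\cal B}(Z)$. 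You also have not identified a concrete choice of $x$; ``a vertex on fewer than $|Z|$ of the Menger paths'' need not exist, and ``a vertex in $X\cap M$'' need not exist either. So as written this is a strategy sketch, not a proof.

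The paper sidesteps all of this bookkeeping by using a much simpler potential: it chooses $X$ with $|X|\le 2n$ minimizing $|{\cal B}(X)|$, and subject to that minimizing $|X|$. Crucially it allows $|X|<2n$ rather than forcing a size-preserving swap. If $X$ is not linked, Menger gives disjoint $A,B\subseteq X$ with $|A|=|B|$ and a separator $C$ with $|C|<|A|$; then $|X\cup C|<3n$, so $H={\cal B}(X\cup C)$ exists. A short argument shows $H$ is still a strong component after one of $A$ or $B$ is restored, say ${\cal B}(X\cup C)={\cal B}((X\setminus A)\cup C)$, and the haven axiom applied to $X\subseteq X\cup C$ gives ${\cal B}((X\setminus A)\cup C)\subseteq{\cal B}(X)$. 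Since $|(X\setminus A)\cup C|<|X|$, this contradicts the (lexicographic) minimality of $X$. Your key move---invoke the haven on a separator to get a strong component---is present, but the contradiction should come from \emph{shrinking} $X$ without enlarging ${\cal B}(X)$, not from a per-pair deficiency count. I'd recommend abandoning $\phi$ and adopting the haven-size potential; your Menger/haven setup then slots in almost unchanged.
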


\begin{proof}
Let $X\subseteq V(D)$ be a subset of size at most $2n$ with $\vert{\cal B}(X)\vert$ as small
as possible.  Subject to that, we choose $\vert X\vert$ as small as possible.  We claim that
$X$ has size $2n$ and is linked.  For the first part of the claim, suppose that $\vert X\vert
<2n$ and let $y\in{\cal B}(X)$.  Then ${\cal B}(X\cup\{y\})\subseteq{\cal B}(X)$ by the
definition of a haven.  But since $y\in{\cal B}(X)$, we have $\vert{\cal B}(X\cup\{y\})\vert
<\vert{\cal B}(X)\vert$, contradicting the choice of $X$.\\

For the second part of the claim, suppose that $X$ is not linked.  Then there exists disjoint
$A,B\subseteq X$ and $C\subseteq V(D)$ with $\vert A\vert=\vert B\vert >\vert C\vert$ such
that there is no directed path from a vertex in $A$ to a vertex in $B$ in $D-C$.  Because $A$
and $B$ are disjoint subsets of $X$, we must have $\vert X\cup C\vert <3n$.  We set $H={\cal B}
(X\cup C)$.  Since $H$ is strongly-connected and disjoint from $X\cup C$, it will be
contained in a strong component $H'$ of $D-((X-A)\cup C)$.  Suppose it is strictly contained.
We claim that then $H'\cap A\neq\emptyset$.  Otherwise, $H'$ remains strong in $D-(X\cup C)$,
contrary to the fact that $H$ is a strong component in $D-(X\cup C)$.  We conclude that there
is a path from a vertex in $A$ to a vertex in $H$ disjoint from $C$.  A similar analysis
shows that if $H$ is strictly contained in a strong component of $D-((X-B)\cup C)$, there is a
path from a vertex in $H$ to a vertex in $B$ disjoint from $C$.  Since there is no path from
a vertex in $A$ to a vertex in $B$ disjoint from $C$, we conclude that $H$ is a strong
component of either $D-((X-A)\cup C)$ or $D-((X-B)\cup C)$.\\

Without loss of generality, we may assume thet $H$ is a strong component of $D-((X-A)\cup C)$.
Since $(X-A)\cup C\subseteq X\cup C$, the haven axiom shows ${\cal B}(X\cup C)\subseteq
{\cal B}((X-A)\cup C)$.  But since $H={\cal B}(X\cup C)$ and ${\cal B}((X-A)\cup C)$ are both
strong components in $D-((X-A)\cup C$, ${\cal B}(X\cup C)={\cal B}((X-A)\cup C)$.  Applying
the haven axiom to $X$ and $X\cup C$ then shows ${\cal B}((X-A)\cup C)\subseteq
{\cal B}(X)$.  Since $\vert C\vert <\vert A\vert$, $\vert (X-A)\cup C\vert <\vert X\vert=2n$.
These last two conclusions regarding $(X-A)\cup C$ contradict the choice of $X$. (We see
that either $X$ can be chosen to make ${\cal B}(X)$ smaller or, if not that, $X$ itself can
be made smaller keeping ${\cal B}(X)$ the same.)
\end{proof}

Given a linked set $X$ and two disjoint subsets $A$, $B$ of the same size, there are
vertex-disjoint directed paths linking the vertices in $A$ to the vertices in $B$.  We call
this collection of paths a \dfn{linkage} from A to B.  If a path in the linkage begins at a
vertex $a\in A$ and ends at a vertex $b\in B$, we say that $a$ is linked to $b$ in the
linkage and $b$ is linked from $a$ in the linkage.\\

If linear orders are defined on $A$ and $B$, the linkage is \dfn{monotone
increasing} if for every $a_1$,$a_2\in A$ with $a_1<a_2$, the vertex in $B$ linked from
$a_1$ occurs before the vertex in $B$ linked from $a_2$.  The linkage is \dfn{monotone
decreasing} if the linkage is monotone increasing when the order on $B$ is reversed.  The
linkage is \dfn{monotone} if it is either monotone increasing or monotone decreasing.\\
Because $X$ is linked, there is also a linkage from the vertices in $B$ to those in $A$.
If there exist orders on $A$ and $B$ such that the two linkages are both monotone increasing,
the pair of linkages is said to \dfn{agree}.  If there exist orders on $A$ and $B$ such that
one linkage is monotone increasing and the other is monotone decreasing, the pair of linkages
is said to \dfn{cross}.

\begin{lem}\label{linkmatch}
Let $n$ be a positive integer.  There exists a positive integer $N=N(n)$ such that the
following holds: If $X$ is a linked set in a digraph $D$ and $A$,$B\subseteq X$ are disjoint
with $\vert A\vert=\vert B\vert=N$, there are $A'\subseteq A$ and $B'\subseteq B$ with
$\vert A'\vert =\vert B'\vert=n$.  In addition, there are $n$ vertex-disjoint directed
paths linking the vertices in $A'$ to those in $B'$, $n$ vertex-disjoint directed paths
linking the vertices in $B'$ to those in $A'$, and these two linkages either agree or
cross.
\end{lem}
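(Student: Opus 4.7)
The plan is to apply the Erd\H{o}s--Szekeres theorem twice, using $X$-linkedness to produce fresh linkages matched to the specific subsets we have identified at each stage. I set $N=N(n)$ large (a polynomial of degree around four should suffice). Fix arbitrary linear orders on $A$ and $B$. Since $X$ is linked, pick a linkage $\mathcal{P}$ from $A$ to $B$ realizing some bijection $\sigma\colon A\to B$; viewing $\sigma$ as a permutation under the fixed orders, Erd\H{o}s--Szekeres extracts a subset $A_1\subseteq A$ of size $\lceil\sqrt{N}\rceil$ on which $\sigma$ is monotone. Reversing the order of $B$ if necessary, I may assume $\sigma|_{A_1}$ is monotone increasing, and set $B_1=\sigma(A_1)$; then under the induced orders $\mathcal{P}|_{A_1}$ realizes an order-preserving bijection from $A_1$ to $B_1$.

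The key move now is to invoke $X$-linkedness a second time with the specific pair $(B_1,A_1)$: this produces a linkage $\mathcal{Q}$ from $B_1$ to $A_1$ whose bijection $\tau\colon B_1\to A_1$ has codomain exactly $A_1$ rather than some other part of $A$. The composition $\pi=\tau\circ\sigma|_{A_1}$ is then a genuine permutation of $A_1$, and a second application of Erd\H{o}s--Szekeres to $\pi$ produces a subset $A'\subseteq A_1$ of size $\lceil N^{1/4}\rceil\ge n$ on which $\pi$ is monotone. If additionally $\pi(A')=A'$, then $\pi|_{A'}$---being a monotonic permutation of an ordered set---is forced to be either the identity (in which case the sub-linkages of $\mathcal{P}$ and $\mathcal{Q}$ on $A'$ and $\sigma(A')$ agree) or the reversal (in which case they cross), finishing the proof.

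The main obstacle I anticipate is enforcing the invariance $\pi(A')=A'$: plain Erd\H{o}s--Szekeres produces a monotonic subsequence of values of $\pi$, not a monotonic sub-permutation of the same set. I would handle this by pigeonholing over the cycle structure of $\pi$. If $\pi$ has $n$ fixed points, they give an ``agree'' set directly; if $\pi$ has $n/2$ two-cycles, their union gives a ``cross'' set, since a fixed-point-free involution on $2k$ elements is conjugate to the reversal. Otherwise every cycle of $\pi$ has length at least three, and I would then exploit the flexibility of $X$-linkedness to replace $\tau$ with a different backward linkage from $B_1$ to $A_1$: different choices of $\tau$ yield different cycle structures for $\pi$, and an iterative or Ramsey-type argument over these choices --- possibly combined with a further ES refinement --- should produce a backward linkage for which $\pi$ has enough fixed points or two-cycles. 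With these ingredients $N(n)$ remains polynomial in $n$.
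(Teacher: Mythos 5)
Your reduction is set up correctly: once both linkages are fixed, ``agree'' forces the backward bijection to be the inverse of the forward one on the chosen subset, and ``cross'' forces the composite $\pi=\tau\circ\sigma|_{A_1}$ to restrict to an involution with at most one fixed point; so within your framework you must find $A'\subseteq A_1$ of size $n$ with $\pi(A')=A'$ on which $\pi$ is the identity or such an involution. The production of that $A'$ is a genuine gap, not a loose end. Erd\H{o}s--Szekeres gives a monotone subsequence, not an invariant set, as you concede, and the case your pigeonhole leaves open is the generic one: if $\pi$ is a single cycle of length $|A_1|$, its only nonempty invariant subset is all of $A_1$, so no choice of sub-linkages of the fixed pair $(\mathcal{P},\mathcal{Q})$ can succeed. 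Your proposed repair --- ``replace $\tau$ with a different backward linkage'' --- has nothing to stand on: linkedness only asserts that \emph{some} linkage exists between each pair of equal-sized subsets and gives no control over which bijection it realizes, so there is no family of ``choices of $\tau$'' over which to run an iterative or Ramsey argument; the digraph may admit essentially one backward linkage from $B_1$ to $A_1$, and it may compose with $\sigma$ to a single long cycle.

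The paper's proof avoids this by reversing the order of quantifiers. It fixes $A_0\subseteq A$ of size $n_0=n^2+1$ and first applies Ramsey in the \emph{forward} direction, so that every $n_0$-element target set inside some large $B_1\subseteq B$ induces the same order on $A_0$; the forward linkage's behaviour is thereby made predictable before any backward linkage is examined. It then takes one backward linkage from a set $B_0\subseteq B_1$ whose elements are spread out in $B_1$ with gaps of $n_0-1$, applies Erd\H{o}s--Szekeres to that backward linkage alone to obtain a monotone matching of some $B'$ onto some $A'$, and finally pads $B'$ to a set $B''\subseteq B_1$ of size $n_0$ in which each $b_j$ sits in exactly the position $i_j$ that its partner occupies in the fixed order on $A_0$ (the spreading-out of $B_0$ is what makes this padding possible). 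The Ramsey step guarantees the forward linkage from $A_0$ to $B''$ is order-preserving, hence forced to match the $i_j$-th element of $A_0$ to $b_j$, i.e., to restrict to the inverse of the backward matching up to orientation. The idea you are missing is that the target set of the forward linkage can be chosen \emph{after} inspecting the backward one; that freedom replaces the invariant subset your argument cannot produce. (Note also that the Ramsey step costs far more than a polynomial in $n$, so the bound you advertise would not survive either.)
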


\begin{proof}
We begin by fixing $A_0\subseteq A$ of size $n_0=n^2+1$.  We next arbitrarily order the
elements of $B$.  For every $B_0\subseteq B$ of size $n_0$, there is a linkage from $A_0$
to $B_0$.  Using the matching defined by this linkage, the order on $B_0$ (inherited
from $B$) defines an order on $A_0$.  Applying Ramsey theory, for $N$ sufficiently
large compared to $n_0$ we may select $B_1\subseteq B$ with $\vert B_1\vert=n_0^2$ such
that every subset of $B_1$ of size $n_0$ defines the same order on $A_0$.  We fix this
order on $A_0$ and number the vertices in $A_0$ from $1$ to $n_0$ to agree with this
ordering.\\

We number the vertices of $B_1$ from $1$ to $n_0^2$ to agree with the ordering on $B_1$.  We
form the subset $B_0$ consisting of those vertices numbered $i(n_0-1)+1$ $\forall 1\leq
i\leq n_0$.  Since $X$ is linked, there is a linkage $L$ from $B_0$ to $A_0$.  We apply
Erods-Sekeres to find $A'\subseteq A_0$ and $B'\subseteq B_0$ with $\vert A'\vert=\vert
B'\vert=n$ such that the vertices in $B'$ are matched to those in $A'$ under $L$ and this
restriction of the linkage is monotone.\\

The vertices in $A_0$ are numbered, and we let the numbers on the vertices in $A'$ be
$\{i_1,i_2,...,i_n\}$ with $i_j<i_k$ when $j<k$.  Similarly we let the vertices in $B'$ be
$\{b_1,b_2,...,b_n\}$ with $b_i$ occuring before $b_j$ in the order on $B_1$ when $i\leq j$.
We form $B''$ with $B'\subset B''\subset B_1$ of size $n_0$ by starting with $B'$.  We then
select $i_1-1$ elements of $B_1$ which occur before $b_1$ and $n_0-i_n$ elements which occur
after $b_n$.  Finally, for $1\leq j<n$, we select $i_{j+1}-i_j$ elements which occur between
$b_{j+1}$ and $b_j$. (These choices are all possible since $B'$ is a subset of $B_0$.) Since
$B''$ is a subset of $B_1$ of size $n_0$, there is a linkage from $A_0$ to $B''$ that is
monotone increasing.  This linkage restricts to a monotone increasing linkage between $A'$
and $B'$.  Combined with the linkage found in the previous paragraph, this completes the
proof.
\end{proof}

\begin{thm}\label{linkmatch2} Let $n$ be a positive integer.  There exists a positive
integer $N=N(n)$ such that if $X$ is a linked set in a digraph $D$ and $A,B\subseteq X$
are disjoint with $\vert A\vert=\vert B\vert=N$, one of the following holds:\\
\begin{enumerate}
\item{}There are $A^*\subseteq A$ and $B^*\subseteq B$ with $\vert A^*\vert =\vert B^*\vert=n$.
In addition, there are $n$ vertex-disjoint directed paths linking the vertices in $A^*$ to
those in $B^*$, $n$ vertex-disjoint paths linking the vertices in $B^*$ to those in $A^*$,
and these two linkages agree.  Finally, the two linkages are disjoint expect that the
directed paths between the same two vertices may share vertices and edges.
\item{}There is a $4$-eulerianizable subdigraph $D'$ of $D$ with a haven of order $n+1$.
\end{enumerate}
\end{thm}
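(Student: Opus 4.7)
The plan is to apply Lemma~\ref{linkmatch} with a parameter $m$ chosen polynomially large in $n$, which will produce $A'\subseteq A$ and $B'\subseteq B$ of size $m$ together with two linkages $L_1=\{P_1,\ldots,P_m\}$ from $A'$ to $B'$ and $L_2=\{Q_1,\ldots,Q_m\}$ from $B'$ to $A'$ that either agree or cross.  I will analyze the subdigraph $D'=L_1\cup L_2$ of $D$ in both cases.  Since each linkage is internally vertex-disjoint, every vertex of $D'$ lies on at most one path of each linkage, so it has in-degree and out-degree at most $2$ in $D'$; and a short case check at path endpoints, using either the agreement or the crossing pattern between $L_1$ and $L_2$, shows every vertex has in-degree $\geq 1$ iff it has out-degree $\geq 1$.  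Adding parallel edges to equalize the imbalance shows $D'$ is $4$-eulerianizable.

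If the linkages \emph{cross}, re-index so that $P_i$ goes $a_i\to b_i$ and $Q_i$ goes $b_i\to a_{m+1-i}$.  Then the concatenation $P_i\cdot Q_i\cdot P_{m+1-i}\cdot Q_{m+1-i}$ is a closed directed walk through $a_i$ for each $i$, so the ``intact'' packets (those $P_i,Q_i$ disjoint from $Z$) form a strongly connected piece of $D'-Z$.  For any $Z$ with $|Z|\leq n$, at most $2n$ packets are broken (since each $z\in Z$ belongs to at most one $P_i$ and one $Q_i$), leaving $\geq m-2n$ intact packets provided $m$ is chosen at least $4n+2$.  I would define $\mathcal{B}(Z)$ to be the strong component of $D'-Z$ containing this intact core; the haven axiom then holds because $Z'\subseteq Z$ yields a superset of intact packets, so $\mathcal{B}(Z)$ and $\mathcal{B}(Z')$ share vertices and hence coincide as strong components of $D'-Z'$.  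This verifies conclusion~2 in the crossing case.

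If the linkages \emph{agree}, set $C_i=P_i\cup Q_i$; each $C_i$ is a strongly connected subdigraph joining $a_i$ to $b_i$ in both directions.  Build the intersection graph $G$ on $\{1,\ldots,m\}$ with $ij\in E(G)$ iff $V(C_i)\cap V(C_j)\neq\emptyset$.  Taking $m$ large enough, Ramsey's theorem gives either an independent set of size $n$ or a clique of size $3n+1$ (say) in $G$.  In the independent case the corresponding $C_i$'s are pairwise vertex-disjoint, so $A^*=\{a_i:i\in I\}$, $B^*=\{b_i:i\in I\}$ together with the corresponding $P_i$ and $Q_i$ furnish conclusion~1 directly: the restriction of agreeing linkages still agrees, and disjointness fails only between $P_i$ and $Q_i$ with matching index.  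In the clique case I let $D''$ be the union of the $3n+1$ pairwise intersecting packets; it is $4$-eulerianizable by the same degree bound, and I would use the redundancy of intersections to define a haven of order $n+1$, obtaining conclusion~2.

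The main obstacle will be the clique sub-case of the agreeing analysis: proving that after deleting any $|Z|\leq n$ vertices, the surviving packets still all meet each other inside a single strong component of $D''-Z$.  A single shared vertex between $C_i$ and $C_j$ can be destroyed by $Z$, so I expect to need a stronger preliminary selection, iterating Ramsey on the ``multiplicity'' of intersections (or using Erd\H{o}s--Szekeres-style pruning inside each pairwise intersection) to extract a sub-clique in which every pair of packets shares at least $n+1$ vertices.  Once that is in hand, deleting $n$ vertices cannot disconnect any pair, and the consistent choice of $\mathcal{B}(Z)$ proceeds as in the crossing case.
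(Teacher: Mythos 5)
Your overall strategy matches the paper's (apply Lemma~\ref{linkmatch}, bundle the paths into strongly connected ``packets'', and run a disjoint-versus-pairwise-intersecting dichotomy on them), but there are two genuine gaps. First, in the crossing case you assert that the intact packets form a single strongly connected piece of $D'-Z$. They need not: packet $i$ (the closed walk through $P_i,Q_i,P_{m+1-i},Q_{m+1-i}$) and packet $j$ are built from vertex-disjoint paths of the two linkages and, in a general digraph, may share no vertex at all, so already $D'$ itself can fail to be strongly connected and ``the strong component containing the intact core'' is undefined. You must run the same Ramsey dichotomy on the crossing packets that you run on the agreeing ones, and note that the vertex-disjoint outcome in the crossing case yields conclusion~1, not conclusion~2: each packet internally contains a directed path $a_i\to b_i$ (namely $P_i$) and a directed path $b_i\to a_i$ (inside $Q_i\cdot P_{m+1-i}\cdot Q_{m+1-i}$), and $n$ vertex-disjoint packets give agreeing linkages on the pairs $(a_i,b_i)$ satisfying the disjointness requirement.

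Second, the ``main obstacle'' you flag in the agreeing clique sub-case is not an obstacle, and the fix you propose cannot work. If two packets $C_i,C_j$ are both disjoint from $Z$, their common vertex lies in both and hence is not in $Z$, so $C_i\cup C_j$ remains strongly connected in $D''-Z$; thus all packets disjoint from $Z$ automatically lie in one strong component, which is the required $\mathcal{B}(Z)$ (with $2n+1$ packets and each vertex in at most two of them, at least one packet survives any $Z$ with $|Z|\le n$, and monotonicity under $Z'\subseteq Z$ follows since a strong component of $D''-Z$ meeting $\mathcal{B}(Z')$ is contained in it). By contrast, a sub-clique in which every pair of packets shares $n+1$ vertices cannot in general be extracted: since each vertex lies on at most one path of each linkage it lies in at most two packets, so the pairwise intersections are disjoint across distinct pairs and may each be a single vertex, leaving nothing for Ramsey or Erd\H{o}s--Szekeres to amplify. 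With these two repairs your argument coincides with the paper's proof.
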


\begin{proof}
We may assume that $n$ is even.  We apply \rft{linkmatch} with the $n$ in that
lemma equal to $n'$, $n'$ to be chosen as the proof proceeds but at least $n$.  Let $A'$
and $B'$ be as in the outcome of \rft{linkmatch}.\\

We next form a collection of subdigraphs.  In the case that the linkages from \rft{linkmatch}
agree, the $n'$ subdigraphs are formed by taking the union of a path in one linkage with a
path in the other with the same end-vertices.  In the case that the linkages cross, $\frac{n'}
{2}$ subdigraphs are formed by taking the union of two paths in one linkage and two paths in
the other chosen so that each has its end-vertices among a fixed set of size $4$.\\

By choosing $n'$ sufficiently large, there are either $n$ of these subdigraphs that are
vertex-disjoint or there are $2n+1$ that pairwise share a vertex.  In the first case, we get
the first outcome by selecting a vertex in $A$ and a vertex in $B$ from each subdigraph.  In
the second case, let $D'$ be the restriction of $D$ to the $2n+1$ subdigraphs that pairwise
share a vertex.  Every vertex is contained in at most one of the paths in each linkage, so
every vertex is contained in at most two of the subdigraphs.  By adding parallel edges to
$D'$, we may make the subdigraphs edge-disjoint and thereby make $D'$ eulerian.  Since the
maximum out-degree of a veretx in any of the sub-digraphs is $2$, we see that $D'$ is
$4$-eulerianizable.\\

To show that $D$ has a haven of order $n+1$, we again use the fact that every vertex is in
at most $2$ subdigraphs.  Therefore, for every set $X\subseteq V(D')$ with $\vert X\vert\leq
n$, there is at least one subdigraph disjoint from $X$.  Since every two subdigraphs have a
vertex in common, we construct the necessary haven by letting ${\cal B}(X)$ be the strong
component of $D'-X$ containing all of the subdigraphs disjoint from $X$.  
\end{proof}


\section{Reducing To Eulerian}

Our goal in this paper is to prove a digraph which has a haven of large order contains a
large cylindrical grid minor.  In this section, we show that a general digraph with a large
haven contains a subdigraph which is $6$-eulerianizable which also has a large haven.  We
begin with a definition.  Let $G$ be a graph and $D$ a digraph.  Suppose that $D$ has two
collections ${\cal V}$,${\cal E}$ of vertex-disjoint strongly connected subdigraphs with the
subdigraphs in ${\cal E}$ $2$-eulerianizable.  Suppose further that there are bijections
$\nu:{\cal V}\rightarrow V(G)$ and $\epsilon:{\cal E}\rightarrow E(G)$ such that if $e\in{\cal E}$ is
mapped to an edge with end-vertices $u$ and $v$ in $G$, $e$ shares a vertex with each of the
two elements in ${\cal V}$ mapped to $\{u,v\}$.  We then say that $(D,{\cal V},{\cal E},\nu,
\epsilon)$ or $(D,{\cal V},{\cal E})$ is a \dfn{representation} of $G$ or that $D$ has a
representation of $G$.  If every $e\in{\cal E}$ is disjoint from all $v\in{\cal V}$ except
those $v$ with $\nu(v)$ an end of $\epsilon(v)$, we say that the representation is
\dfn{faithful}.\\

\begin{thm}\label{representationhaven}
Let $D$ be a digraph which has a faithful representation $(D,{\cal V},{\cal E},\nu,\epsilon)$
of some graph $G$.  Suppose that $G$ has a haven of order $h+1$, $h$ a positive even integer.
Then $D$ has a haven of order $h/2+1$.
\end{thm}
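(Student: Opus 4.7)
The plan is to pull back the haven $\cal B$ of $G$ to a haven ${\cal B}'$ of $D$. Given $X\subseteq V(D)$ with $\vert X\vert\leq h/2$, I build $Y\subseteq V(G)$ as follows. Since the subdigraphs within $\cal V$ (resp.\ within $\cal E$) are pairwise vertex-disjoint, each $x\in X$ lies in at most one $\nu^{-1}(u)$ and at most one $\epsilon^{-1}(e)$; put into $Y$ that $u$ (if any) together with both endpoints of that $e$ (if any). Then $\vert Y\vert\leq 2\vert X\vert\leq h$, so ${\cal B}(Y)$ is defined.

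The central claim is: for every $u\in{\cal B}(Y)$, $\nu^{-1}(u)\cap X=\emptyset$, and for every edge $e$ of $G$ with both endpoints in ${\cal B}(Y)$, $\epsilon^{-1}(e)\cap X=\emptyset$. The first is immediate from the construction. The second uses faithfulness: any $x\in\epsilon^{-1}(e)\cap X$ either caused both endpoints of $e$ to be added to $Y$, or also lies in some $\nu^{-1}(u)$, in which case $u$ is an endpoint of $e$ by faithfulness and $u\in Y$. Either way some endpoint of $e$ lies in $Y$, contradicting that $e$ sits inside ${\cal B}(Y)$.

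Now let ${\cal B}'(X)$ be the strong component of $D-X$ containing $\bigcup_{u\in{\cal B}(Y)}\nu^{-1}(u)$. This union lies in a single strong component of $D-X$ because each $\nu^{-1}(u)$ is strongly connected, and any directed walk inside the strong component ${\cal B}(Y)$ of $G-Y$ lifts, via the (strongly connected) intermediate $\epsilon^{-1}(e)$ and $\nu^{-1}(u_i)$ subdigraphs (all disjoint from $X$ by the claim), to a directed walk in $D-X$---using that each such $\epsilon^{-1}(e)$ shares a vertex with $\nu^{-1}(u)$ for every endpoint $u$ of $e$.

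The haven axiom follows because the construction rule is monotone: if $X'\subseteq X$ then $Y'\subseteq Y$, so ${\cal B}(Y)\subseteq{\cal B}(Y')$ by the haven axiom in $G$; any vertex of $\bigcup_{u\in{\cal B}(Y)}\nu^{-1}(u)$ then lies in both ${\cal B}'(X)$ and ${\cal B}'(X')$, which forces ${\cal B}'(X)\subseteq{\cal B}'(X')$ since $D-X$ is a subdigraph of $D-X'$ and strong components of the latter are disjoint. The main obstacle is really the disjointness claim; faithfulness is exactly what is needed to rule out an edge-subdigraph meeting $X$ without an endpoint of its image edge entering $Y$, and it is why we can afford $\vert Y\vert\leq 2\vert X\vert$ rather than something worse.
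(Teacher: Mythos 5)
Your proposal is correct and follows essentially the same route as the paper: the paper defines the map $m$ sending each vertex of $D$ to the (at most two, by faithfulness) vertices of $G$ it ``represents,'' sets $Y=\bigcup_{x\in X}m(x)$, and takes ${\cal B}_D(X)$ to be the strong component of $D-X$ containing $\nu^{-1}(u)$ for $u\in{\cal B}_G(Y)$. You have simply filled in the verification that the paper dismisses as ``easy to check'' (the lifting of walks and the monotonicity giving the haven axiom), so no further comparison is needed.
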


\begin{proof}
We begin by constructing a map $m$ from $V(D)$ to $2^{V(G)}$.  If $v\in V(D)$ is disjoint
from every member of ${\cal V}$ and ${\cal E}$, we set $m(v)=\emptyset$.  If $v$ is disjoint
from every member of ${\cal E}$ but intersects $u\in{\cal V}$, we set $m(v)=\{\nu(u)\}$.
Otherwise, $v$ intersects some $e\in{\cal E}$ and we set $m(v)=\{u,u'\}$ where $u$ and $u'$
are the ends of $\epsilon(e)$.\\

Let ${\cal B}_G$ be a haven of order $h+1$ for $G$.  We will construct a haven ${\cal B}_D$
of order $h/2+1$ for $D$.  Let $X\in V(D)$ with $\vert X\vert\leq n/2$.  Let $Y=\cup_{x\in
X}m(x)$.  Then $\vert Y\vert\leq h$ so ${\cal B}_G(Y)$ exists.  Let $u\in{\cal B}_G(Y)$.
We set ${\cal B}_D(X)$ to be the strong component that contains $\nu^{-1}(u)$.  It is easy
to check that this is uniquely defined and gives a haven in $D$ of order $h/2+1$.
\end{proof}

\begin{thm}\label{lemma1}
Let $n$ be a positive integer.  There exists a positive integer $N=N(n)$ such that
if $D$ is a digraph containing a linked set of size at least $N$, one of the following
holds.\\
\begin{itemize}
\item{}$D$ has a $4$-eulerianizable sub-digraph with a haven of order at least $n$.
\item{}There is a representation of $K_n$ in $D$.
\end{itemize}
\end{thm}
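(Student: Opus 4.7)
The plan is to construct a $K_n$-representation in $D$ edge by edge, iteratively applying Theorem~\ref{linkmatch2}. Whenever \ref{linkmatch2} returns its second outcome---a $4$-eulerianizable subdigraph with a haven of order $n'+1$, where $n'$ is the parameter passed in---we obtain the first conclusion of \ref{lemma1} directly, provided $n'\geq n-1$. So throughout the iteration I may assume the first outcome of \ref{linkmatch2} occurs.

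First, I would partition the given linked set $X$ of size $N(n)$ into $n$ blocks $X_1,\dots,X_n$ of large equal size; these host the attachment vertices of the eventual vertex-subdigraphs $V_1,\dots,V_n$. I then process the $\binom{n}{2}$ pairs $\{i,j\}$ in some fixed order $p_1,\dots,p_m$. At step $k$ with pair $\{i,j\}$, apply Theorem~\ref{linkmatch2} with parameter $n_k$ to current subsets $Y_i\subseteq X_i$ and $Y_j\subseteq X_j$ of the size required by \ref{linkmatch2}; here $n_k\geq\max\{n-1,\,|U_{k-1}|+1\}$, where $U_{k-1}=\bigcup_{l<k}V(E_{p_l})$ is the union of edge-subdigraphs chosen in previous steps. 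Outcome~1 of \ref{linkmatch2} yields $n_k$ pairwise vertex-disjoint edge-structures, each a union of two agreeing paths and hence strongly connected and $2$-eulerianizable. Since these $n_k$ candidates are pairwise disjoint, at most $|U_{k-1}|<n_k$ of them meet $U_{k-1}$, so I can select one of them, call it $E_{p_k}$, disjoint from all previously chosen edges.

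After all $\binom{n}{2}$ pairwise disjoint edge-subdigraphs $E_{ij}$ are selected, each contains attachment vertices $v_{ij}\in X_i$ and $v_{ji}\in X_j$. I then build the vertex-subdigraph $V_i$ as a strongly connected subdigraph of $D$ containing the attachment vertices $\{v_{ij}:j\neq i\}\subseteq X_i$: start with these vertices and iteratively add directed paths between pairs of them---existing by the linked property of $X$---choosing the paths via further Ramsey-style selection so that different $V_i$'s are pairwise disjoint. The resulting $(\{V_i\},\{E_{ij}\})$ is the desired $K_n$-representation.

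The main obstacle is closing the recursion so that $N(n)$ depends only on $n$. The parameter $n_k$ must exceed $|U_{k-1}|$, but individual edge-structure sizes---and hence $|U_{k-1}|$---are not a priori bounded by a function of $n$ alone, since the paths produced by \ref{linkmatch2} may be arbitrarily long. Resolving this requires a more careful argument: for instance, selecting the smallest among the $n_k$ pairwise disjoint candidates at each step (whose size is at most $|V(D)|/n_k$) and propagating this through a recursive definition of $n_k$, or a simultaneous Ramsey-style selection across all $\binom{n}{2}$ pairs at once. An analogous disjointness argument is needed to construct the $V_i$'s. I expect this quantitative bookkeeping to be the technically hardest part of the proof; once it is done, the final $N(n)$ is a tower-type function of $n$.
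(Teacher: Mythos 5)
The gap you flag at the end is real and it is the heart of the proof; neither of your proposed patches closes it. Taking the smallest of the $n_k$ pairwise disjoint candidates bounds its size only in terms of $|V(D)|/n_k$, which is not a function of $n$, so $|U_{k-1}|$ still cannot be kept below $n_k$ and the greedy selection collapses. The missing idea is that one should not try to force the edge-structures for different pairs to be disjoint at all: when they intersect heavily, that is itself a winning outcome. Concretely, the paper applies Theorem~\ref{linkmatch2} to each pair to get a \emph{linking} consisting of many vertex-disjoint \emph{connectors} (a forward path united with its agreeing backward path), and then runs a bipartite Ramsey argument on the connectors of every two linkings: either one can pass to sublinkings in which every connector of one is disjoint from every connector of the other, or to sublinkings in which every connector of one meets every connector of the other. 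Iterating over all pairs of pairs, either a single connector can be chosen for each pair so that all choices are pairwise disjoint (giving the representation), or there are two linkings ${\cal L}_1$, ${\cal L}_2$ of sizes $n$ and $n(n-1)/2$ whose connectors pairwise intersect --- and then the connectors of ${\cal L}_1$ serve as the vertex-subdigraphs and those of ${\cal L}_2$ as the edge-subdigraphs, since each of the latter meets all of the former. Your second suggestion (``simultaneous Ramsey-style selection'') gestures at this but the essential point --- that the non-disjoint branch of the Ramsey dichotomy directly produces the representation rather than being an obstruction --- is absent.

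The same unbounded-size obstruction defeats your construction of the vertex-subdigraphs $V_i$: paths between attachment vertices supplied by linkedness have no length bound, so ``further Ramsey-style selection'' cannot make $n$ such strongly connected unions pairwise disjoint. The paper avoids building the $V_i$ afterwards; it builds them first, by an induction on the number $c$ of elements of $X$ that each of a family of disjoint strongly connected subdigraphs contains (Claims 1 and 2 of the paper's proof). Each augmentation step links the current subdigraphs to fresh vertices of $X$, forms an auxiliary digraph of bounded out-degree recording which linkings meet which subdigraphs, and either extracts a large independent set (each subdigraph absorbs its own linking, gaining an element of $X$ while staying disjoint from the others) or a large transitive tournament, which again yields the representation outright. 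Only once $c$ is large does the pairwise application of Theorem~\ref{linkmatch2} described above take place. So your overall strategy points in the right direction, but the two places where you defer to ``quantitative bookkeeping'' are exactly where the substantive ideas of the proof live, and the argument as written does not go through.
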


\begin{proof}
The proof will consist of proving two claims.\\

\begin{it}
\noindent Claim 1: Let $n$ be a positive integer.  There exists a positive integer
$N=N(n)$ such that the following holds: Let ${\cal V}$ be a family of vertex-disjoint
strongly connected subdigraphs of a digraph $D$.  Let $\vert{\cal V}\vert =N$ and let $D$
have a linked set $X$ such that each member of ${\cal V}$ contains at
least some number $c$ elements of $X$.  Then, one of the following holds.\\
\begin{enumerate}
\item{}$D$ has a $4$-eulerianizable sub-digraph with a haven of order at least $n+1$.
\item{}There is a family ${\cal V'}$, $\vert{\cal V'}\vert =n$, of vertex-disjoint strongly
connected subdigraphs and a second family ${\cal E}$, $\vert{\cal E}\vert =n(n-1)/2$, of
vertex-disjoint, $2$-eulerianizable subdigraphs.  For every pair of members of ${\cal V'}$,
there is a distinct member of ${\cal E}$ that meets both in at least one vertex.
\item{}There is a family ${\cal V'}$, $\vert{\cal V'}\vert =n$, of vertex-disjoint strongly
connected subdigraphs such that each member of ${\cal V'}$ contains at least $c+1$
elements of $X$.
\end{enumerate}
\end{it}

\noindent {\it Proof of claim:} By ignoring a large number of members of ${\cal V}$, we may
assume that there are at least $\vert{\cal V}\vert$ elements of $X$ that are not in any member
of ${\cal V}$ while keeping $\vert{\cal V}\vert$ arbitrarily large.  We now select one member
of $X$ from each member of ${\cal V}$ to form the set $A$, and we select $\vert{\cal V}\vert$
elements of $X$ disjoint from ${\cal V}$ to form the set $B$.  We apply \rft{linkmatch2} with
the $n$ of that lemma equal to some positive integer $m$ to be determined.  For sufficiently
large $\vert{\cal V}\vert$, one result of \rft{linkmatch2} is the first outcome of this claim,
in which case the proof of the claim is complete.  The other outcome is two subsets $A'
\subseteq A$ and $B'\subseteq B$ of size $m$, a linkage from $A'$ to $B'$, and a linkage
from $B'$ to $A'$.  Moreover, these two linkages agree and are pairwise vertex-disjoint
except for those paths with the same end-vertices.  We let ${\cal V}_1$ be the $m$ members
of ${\cal V}$ containing an element of $A'$.  For every $v\in{\cal V}_1$, we call the two paths
linking its element in $A'$ to and from an element in $B'$ the \dfn{linking} associated with
$v$.\\

We form a digraph $G$ with vertex set the members of ${\cal V}_1$, and we add an edge from
$v$ to $u$ if the linking associated with $v$ shares a vertex with $u$.  For sufficiently
large $m$, $G$ has either an independent set ${\cal I}$ of size $n$ or a tournament subdigraph
of order $m'$ ($m'$ to be chosen).  In the first case, we augment each member of ${\cal I}$
by its associated linking.  The result is that each member of ${\cal I}$ remains strongly
connected and now contains at least $c+1$ elements of $X$.\\

For the second case, we recall that for any given $k$, there is a $K$ such that every
tournament on $K$ vertices contains a transitive tournament on $k$ vertices.  So we may
choose $m'$ such that there is a transitive tournament subdigraph of $G$ with $n*(n+1)/2$
vertices.  We let ${\cal V}_2$ be the vertex set of this tournament and order the elements
of ${\cal V}_2$ from highest out-degree in the transitive tournament to lowest.  Let the
family of linkings associated with the first $n$ members of ${\cal V}_2$ be ${\cal V'}$.
The members of ${\cal V}'$ are strongly connected and pairwise vertex-disjoint.  Moreover,
each of the $\frac{n(n-1)}{2}$ last members of ${\cal V}_2$ meets every member of ${\cal V'}$.
So, for any pair of members of ${\cal V'}$, each of the $\frac{n(n-1)}{2}$ last members of
${\cal V}_2$ contains a $2$-eulerianizable subdigraph meeting both in at least one vertex.  We form a
bijection between the last $\frac{n(n-1)}{2}$ members of ${\cal V_2}$ and the pairs of elements of
${\cal V'}$.  We select a $2$-eulerianizable subdigraph from each of the $\frac{n(n-1)}{2}$ last
members of ${\cal V_2}$ containing a vertex from each member of its associated pair.  These
euleriaizable subdigraphs form ${\cal E}$ in the second outcome of the claim.  This completes
the proof of the claim.\\

The proof of the theorem will follow from the proof of the following claim by taking $c=1$
and taking the family ${\cal V}$ to be singletons.\\

\begin{it}
\noindent Claim 2: Let $n$ be a positive integer.  There exists a positive integer
$N=N(n)$ such that the following holds: Let ${\cal V}$ be a family of vertex-disjoint
strongly connected subdigraphs of a digraph $D$.  Let $\vert{\cal V}\vert =N$ and let $D$
have a linked set $X$ such that each member of ${\cal V}$ contains at least some number $c$
elements of $X$.  Then one of the following holds:\\
\begin{enumerate}
\item{}$D$ has a $4$-eulerianizable sub-digraph with a haven of order at least $n+1$.
\item{}There is a family ${\cal V'}$, $\vert{\cal V'}\vert =n$, of vertex-disjoint strongly
connected subdigraphs and a second family ${\cal E}$, $\vert{\cal E}\vert =n(n-1)/2$, of
vertex-disjoint, $2$-eulerianizable subdigraphs.  For every pair of members of ${\cal V'}$,
there is a distinct member of ${\cal E}$ that meets both in at least one vertex.
\end{enumerate}
\end{it}

\noindent {\it Proof of claim:} We first prove the claim in the case that $c$ is
sufficiently large.  In this case, we restrict ${\cal V}$ to a subset ${\cal V}_1$ of size $n$.
For every pair of members $u$,$v\in{\cal V}_1$, we apply \rft{linkmatch2} with $A$ equal
to $c$ of the vertices in $X$ that are in $u$; $B$ equal to $c$ of the vertices in $X$ that
are in $v$; and $n$ of that lemma equal to $m'$, $m'$ to be determined but larger than $n$.
We may assume that we always get outcome $1$ of \rft{linkmatch2} as otherwise we are done
with the proof of the claim (since $m'\geq n$).  We call the two linkages of outcome $1$ the
\dfn{linking} associated with the pair $(u,v)$.  The size of the linking is $\vert A\vert$.
The union of a path from a vertex in $A$ to a vertex in $B$ with the path from the same vertex
in $B$ back to the vertex in $A$ is called a \dfn{connector}.  By a \dfn{partial linking} we
mean a subset of the linkage from $A$ to $B$ and a subset of the linkage from $B$ to $A$ that
has the same end-vertices as the first linkage. (In other words, a subset of the connectors.)
By bipartite Ramsey, for any positive integer $k$ there is a positive integer $K$ such that
the following holds: Given a linking associated with a pair $(u_1,v_1)$ and a linking
associated with a pair $(u_2,v_2)$ both of size $K$, there are partial linkings of size $k$
for the pairs $(u_1,v_1)$ and $(u_2,v_2)$, subsets of the original linkings.  Moreover, either
every connector of the first linking is disjoint from every connector of the second or every
connector of the first shares a vertex with every connector of the second.  By applying this
to every pair of pairs, we find, for sufficiently large $m'$, either a collection of linkings
of size one, one for each pair, that are pairwise disjoint, or a linking ${\cal L}_1$ of
size $n$ and a linking ${\cal L}_2$ of size $\frac{n(n-1)}{2}$ with pairwise intersecting connectors.
In the first case, we let ${\cal V'}={\cal V}_1$ and let ${\cal E}$ be the set of linkings to
get the second outcome of the claim.  In the second case, we let ${\cal V'}$ be the collection
of connectors in ${\cal L}_1$ and ${\cal E}$ be the collection of connectors in ${\cal L}_2$ to
again produce the second outcome.  This proves the claim in the case of sufficiently large $c$.\\

So, suppose the claim is false in general and let $c_0$ be the largest $c$ which is a
counter-example.  We then apply the first claim with $n$ of claim $1$ equal to the
required $N$ of claim $2$ when $c>c_0$.  Claim $1$ must result in outcome (3), which is the
hypothesis of claim $2$ with $c=c_0+1$.  Since claim $2$ is true in this case, we have a
contradiction.  So claim $2$ is true, and the proof of the theorem is complete.
\end{proof}

Before proceeding we need the following two technical lemmas.
               
\begin{lem}\label{markededges} Let $n$ and $b$ be non-negative integers.  There exists
a positive integer $N=N(n,b)$ such that the following holds: We label each edge of $K_N$ with
a subset of size at most $b$ of the vertex set such that no edge receives a label that is
equal to one of its end-vertices.  Then there is a $K_n$ subgraph such that no edge in the
clique has a label of any vertex in the clique.
\end{lem}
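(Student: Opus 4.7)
The plan is to apply Ramsey's theorem for $3$-uniform hypergraphs, using a $4$-coloring that encodes how many elements of each triple appear in the label of the ``opposite'' edge, and then to rule out every color except $0$ by a simple double-counting argument.

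For each $3$-element subset $T=\{x,y,z\}$ of $V(K_N)$, define its color $c(T)\in\{0,1,2,3\}$ to be the number of elements $t\in T$ for which $t$ lies in the label of the edge $T\setminus\{t\}$.  Set $M=\max(n,\,3b+3)$, and let $N$ be the $4$-color Ramsey number for $3$-uniform hypergraphs guaranteeing a monochromatic clique of size $M$; by the classical hypergraph Ramsey theorem this $N$ is finite.  Pick such an $N$, and let $W\subseteq V(K_N)$ with $\vert W\vert=M$ be a vertex set on which $c$ is constant, say equal to $k$.

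I claim $k=0$.  Count in two ways the pairs $(t,e)$ with $e$ an edge inside $W$ and $t\in W\setminus e$ such that $t$ appears in the label of $e$.  Grouping by triples $T\subseteq W$ (each such pair arises uniquely from $T=e\cup\{t\}$), the total equals $k\binom{M}{3}$.  Grouping by edges, each edge has a label of size at most $b$, so contributes at most $b$ such pairs, giving total at most $b\binom{M}{2}$.  Hence $k(M-2)\leq 3b$, and since $M-2\geq 3b+1$ this forces $k=0$.

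But $c\equiv 0$ on $W$ says exactly that for every edge $\{x,y\}\subseteq W$ and every $z\in W\setminus\{x,y\}$, the vertex $z$ is not in the label of $\{x,y\}$.  In other words the label of every edge of $W$ is disjoint from $W$, and since $M\geq n$, any $n$-element subset of $W$ yields the desired $K_n$.  The heart of the argument is the counting step; the only real cost is the (large but finite) hypergraph Ramsey bound needed to find the monochromatic clique, and no substantive obstacle is expected.
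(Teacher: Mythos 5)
Your proof is correct, and it takes a genuinely different route from the paper. The paper argues by induction on $n+b$: fixing a vertex $v$, it applies graph Ramsey to split into the case of a large clique whose edges all carry the label $v$ (so one may delete $v$ and decrease $b$), and the case of a large clique whose edges avoid $v$; in the latter case it builds an auxiliary digraph of out-degree at most $b$ recording which vertices ``prevent'' which, extracts a large independent set, applies induction for $n-1$, and re-inserts $v$. Your argument instead colors each triple $\{x,y,z\}$ by the number of its elements appearing in the label of the opposite edge, applies $3$-uniform hypergraph Ramsey to get a monochromatic $W$ of size $\max(n,3b+3)$, and then rules out every color except $0$ by double counting pairs $(t,e)$: summing over triples gives $k\binom{|W|}{3}$, summing over edges gives at most $b\binom{|W|}{2}$, so $k(|W|-2)\le 3b$ forces $k=0$, whence the labels of edges inside $W$ are disjoint from $W$. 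What this buys you is a single clean Ramsey application and a short averaging step in place of a double induction with two nested graph-Ramsey arguments; what it costs is a $3$-uniform Ramsey bound rather than the (also tower-type, but structurally smaller) bound implicit in the paper's recursion. Both are fine for the purposes of the paper, where no explicit bound is needed.
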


\begin{proof}
Suppose the result is false.  We choose a counter-example with $b+n$ minimum.  Clearly $b$,
$n\neq 0$.  By our choice of counter-example, the theorem is true for $b-1$,$n$ and $b$,$n-1$.
Let $v\in K_N$, $N$ to be chosen.  If $N$ is sufficiently large, there is either a
$K_{N(n,b-1)}$ ($N(n,b-1)$ exists by our choice of $n$,$b$) subgraph with edges which have
the label $v$ or a $K_m$, $m$ to be determined, subgraph with edges that don't have label $v$.
In the first case we restrict attention to the $K_{N(n,b-1)}$ subgraph.  Since $v$ is not a
vertex of the $K_{N(n,b-1)}$ subgraph, we may regard the edges in this subgraph as having at
most $b-1$ labels.  We then find the desired $K_n$.\\

So we may suppose that we have a $K_m$ subgraph with edges that do not have the label $v$.  We
say that $u$ \dfn{prevents} $u'$ if the edge $vu$ has the label $u'$.  We construct a digraph
$D$ on the $m$ vertices of the $K_m$ subgraph by adding an edge from $u$ to $u'$ if $u$
prevents $u'$.  The maximum out-degree of this digraph is $b$.  We claim that for $m$
sufficiently large, there is an independent set $I$ of size $N(n-1,b)$ of the underlying
undirected graph of $D$.  Otherwise there is a $K_{2b+1}$ subgraph of the underlying
undirected graph, and this contradicts the bound on the maximum out-degree (since some
vertex of the clique has at least as many out-edges as in-edges within the clique).  Back in
the labeled graph, since $I$ has size $N(n-1,b)$, we may apply induction to find a $K_{n-1}$
subgraph such that no edge in the clique has a label of any vertex in the clique.  We claim we
can add $v$ to this clique to produce the desired $K_n$.  $v$ does not appear as a label on
any edge because the $K_{n-1}$ subgraph is contained in the $K_m$ subgraph constructed
earlier.  For any other vertex $u'$ to appear as a label on an edge in the $K_n$, it must
appear as a label on one of the edges incident to $v$, say $vu$.  But this means there is
an edge from $u$ to $u'$ in $D$, which contradicts the fact that $u$ and $u'$ belong to the
same independent set in $D$.
\end{proof}

We will use the previous result to prove the following more general lemma.\\

\begin{lem}\label{generalmarkededges} Let $m$ and $n$ be positive integers.  There exists
a positive integer $N=N(m_1,m_2,n)$ such that the following holds: We label each edge of
$K_N$ with a subset of the vertex set such that no edge receives a label that is equal to
one of its end-vertices.  Then one of the following occurs:\\
\begin{enumerate}
\item{}There is $K_n$ subgraph such that no edge in the clique has a label of any vertex
in the clique.
\item{}There are $m_1$ edges and $m_2$ vertices such that each of the vertices appears as
a label on each of the edges.
\end{enumerate}
\end{lem}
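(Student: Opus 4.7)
The strategy is to prove the lemma by double induction with outer variable $m_2$ and inner variable $n$. The base case $n=1$ is vacuous, and the base case $m_2=1$ reduces to Lemma~\ref{markededges}: assuming outcome~(2) fails, each vertex labels fewer than $m_1$ edges, so $\sum_e |\ell(e)| \leq N(m_1-1)$ and fewer than $N$ edges have label size exceeding $m_1$. By Tur\'an's theorem the ``bad-edge'' graph admits an independent set of size $\Omega(N)$; restricting to this subset gives a subclique in which every edge has at most $m_1$ labels, and Lemma~\ref{markededges} with $b=m_1$ supplies outcome~(1).

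For the inductive step I pick any vertex $v$ and $2$-colour the edges of $K_N-v$ by whether $v$ lies in the label, extracting via Ramsey a monochromatic subclique $K_M$ with $M$ large enough for both cases below. If every edge of $K_M$ has $v$ as a label, I apply the outer inductive hypothesis to $K_M$ with parameters $(m_1,m_2-1,n)$ and modified labels $\ell^*(e):=\ell(e)\setminus\{v\}$. Outcome~$(1')$ transfers to outcome~(1) directly because $v\notin V(K_M)$; an outcome~$(2')$ configuration supplies $m_2-1$ common $\ell^*$-labels $T$ on $m_1$ edges, and since $v\notin T$ the set $T\cup\{v\}$ provides $m_2$ common $\ell$-labels, yielding outcome~(2).

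If no edge of $K_M$ has $v$ as a label, I form the digraph $D_v$ on $V(K_M)$ with $u\to u'$ whenever $u'\in\ell(vu)$. Under the standing assumption that outcome~(2) fails, every $m_2$-subset $T\subseteq V(K_M)$ is contained in $\ell(vu)$ for at most $m_1-1$ choices of $u\in V(K_M)$, so
\[
\sum_{u\in V(K_M)}\binom{|\ell(vu)\cap V(K_M)|}{m_2}\;\leq\;(m_1-1)\binom{M}{m_2}.
\]
Jensen's inequality on the convex function $\binom{x}{m_2}$ converts this into the bound $|E(D_v)|=O(M^{2-1/m_2})$, and Tur\'an's theorem then yields an independent set $I$ in the underlying undirected graph of $D_v$ of size $\Omega(M^{1/m_2})$. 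With $M$ chosen large enough that $|I|\geq N(m_1,m_2,n-1)$, the inner inductive hypothesis applied to $K_I$ with parameters $(m_1,m_2,n-1)$ returns either outcome~$(2')$ (which transfers verbatim) or a $K_{n-1}$; to the latter I adjoin $v$ to form $K_n$, using the blue-case property to kill $v$-labels on edges of $K_{n-1}$ and the independence of $I$ in $D_v$ to ensure that no edge $vu$ has a label inside $V(K_{n-1})$.

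The main obstacle is securing $|E(D_v)|=O(M^{2-1/m_2})$ in the blue case: the bound must be stated in terms of the subclique size $M$ rather than the ambient $N$, which forces us to restrict the failure of outcome~(2) to $m_2$-subsets of $V(K_M)$ and to invoke Jensen carefully (the estimate is trivial when the average is less than $m_2$). Getting this dependence right is what lets the logarithmic growth of $2$-colour Ramsey combine with the polynomial blowup $M\mapsto M^{m_2}$ coming from the Tur\'an step to yield a finite (tower-type) bound on $N(m_1,m_2,n)$.
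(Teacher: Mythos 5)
Your proof is correct, but it takes a genuinely different and considerably heavier route than the paper's. The paper inducts only on $m_1$ and works edge-by-edge: if no edge carries at least $N(m_1-1,m_2,n)$ labels, Lemma~\ref{markededges} immediately produces the $K_n$; otherwise pick a heavy edge $e$, restrict to the clique on its label set, apply the inductive hypothesis there, and if outcome~(2) emerges with $m_1-1$ edges and $m_2$ vertices, simply append $e$ to the edge collection (every vertex of that clique is by construction a label of $e$, so $e$ carries all $m_2$ vertices). The base case $m_1=1$ is exactly Lemma~\ref{markededges} with $b=m_2-1$. You instead run a double induction on $(m_2,n)$, essentially re-enacting inside this more general setting the vertex-extraction machinery from the proof of Lemma~\ref{markededges}: fix $v$, two-colour the edges by whether $v$ lies in the label, apply Ramsey, and in the blue case bound the ``prevents'' digraph via a Jensen/Tur\'an count of $m_2$-subsets of labels. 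The estimate on $\sum_u{|\ell(vu)\cap V(K_M)|\choose m_2}$ is a genuine and correct idea, and the adjoin-$v$ step is handled with appropriate care (blue-ness kills $v$-labels on $K_{n-1}$, independence in $D_v$ kills $K_{n-1}$-labels on the $vu$ edges). But the cost is an extra Ramsey invocation and a polynomial Tur\'an blow-up at every decrement of $n$, yielding a taller tower of bounds; and your $m_2=1$ base case needs its own Tur\'an step (a cap on how many edges each vertex labels does not directly cap per-edge label sizes), whereas the paper's $m_1=1$ base case is immediate. The tool you were missing is the heavy-edge pivot: localising to the label set of a single edge makes the passage from $m_1-1$ to $m_1$ a one-line appending argument and avoids the Ramsey overhead entirely.
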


\begin{proof}
Suppose the theorem is false and we choose a counterexample with $m$ as small as possible.
By \rft{markededges}, $m_1>1$.  By choosing $N$ sufficiently lage, \rft{markededges} shows
that any labeled $K_N$ for which the theorem fails must have an edge $e$ receiving at least
$N(m_1-1,m_2,n)$ labels.  Let $K$ be the clique induced on the vertices appearing as labels
on $e$.  This clique either contains a $K_n$ subgraph that satisfies the first outcome or
has $m_1-1$ edges and $m_2$ vertices such that each of the vertices appears as a label on
each of the edges.  We add $e$ to the collection of edges to get the second outcome.
\end{proof}

We are now ready to present the last main lemma needed for reducing the general case.  It
is concerned with the case when \rft{lemma1} results in outcome $2$.\\

\begin{lem}\label{lemma2} Let $h$ be a positive integer and $N$ be as in
\rft{generalmarkededges}.  Let $k=N(8h^2,N(8h^2,h,h),N(8h^2,h,h)(N(8h^2,h,h)-1)/2)$.  If
$D$ is a digraph with a representation of $K_k$, $D$ has a $5$-eulerianizable subdigraph with
a haven of order at least $h+1$.
\end{lem}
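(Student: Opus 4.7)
My plan is to apply \rft{generalmarkededges} to a natural labeling on the representation of $K_k$ and then to carry out a separate construction in each of the two resulting cases. For each edge $e$ of $K_k$ with endpoints $u,v$, I would assign the label $L(e)=\{w\in V(K_k)\setminus\{u,v\}:\epsilon(e)\cap\nu(w)\neq\emptyset\}$, which measures the failure of faithfulness of the representation at $e$. Applying \rft{generalmarkededges} with parameters $(m_1,m_2,n)=(8h^2,M,M(M-1)/2)$, where $M=N(8h^2,h,h)$, yields one of two outcomes. In Case A, a sub-clique $K_{M(M-1)/2}$ of $K_k$ has no edge labeled by a vertex of the sub-clique, producing a faithful representation of $K_{M(M-1)/2}$ in $D$. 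In Case B, there are $8h^2$ edges $f_1,\dots,f_{8h^2}$ and $M$ vertex-subdigraphs $\nu(z_1),\dots,\nu(z_M)$ such that each $\epsilon(f_i)$ meets every $\nu(z_j)$.

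In Case A, I would choose a bounded-degree subgraph $H$ of $K_M\subseteq K_{M(M-1)/2}$ with a large haven and then invoke \rft{representationhaven}. Concretely, let $H$ be a $2h\times 2h$ grid: it has maximum degree $4$, $O(h^2)$ edges, and a haven of order $2h+1$. Such an $H$ embeds in $K_M$ because $M=N(8h^2,h,h)$ is at least $4h^2$. Restricting the faithful representation of $K_{M(M-1)/2}$ to the vertices and edges of $H$ and applying \rft{representationhaven} with the even parameter $h'=2h$ yields a haven of order $h+1$ in the resulting subdigraph of $D$.

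In Case B, I would build the haven-giving subdigraph directly from the shared structure. Each $\epsilon(f_i)$, being $2$-eulerianizable and strongly connected, contains a closed walk visiting every $\nu(z_j)$. Using a further Ramsey-type argument (made possible because the inner bound $M=N(8h^2,h,h)$ is itself of the requisite form), I would extract $h$ such walks whose cyclic orderings agree on a common subset of $2h$ of the $\nu(z_j)$'s, then glue them via short paths inside those $\nu(z_j)$'s to form a cylindrical-grid-like subdigraph of size $h$, which has its own haven of order $h+1$.

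The main obstacle in both cases is achieving the $5$-eulerianizability, since vertex-subdigraphs are only required to be strongly connected and can internally have unbounded degree. The remedy is to replace each vertex-subdigraph used in the final subdigraph by a small strongly connected skeleton passing through its at most four port vertices: a closed walk through the ports contributes in-degree and out-degree $1$ at each port, which combined with the at most $2$ each from the incident $2$-eulerianizable edge-subdigraph gives in- and out-degree at most $3$ at every port. Internal vertices of the skeletons require careful routing so that each lies on at most a bounded number of segments, and this is where I expect the argument to be most delicate --- particularly in Case B, where $h$ radial connections must be routed inside each $\nu(z_j)$ without overloading any single vertex.
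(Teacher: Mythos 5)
Your Case A is essentially the paper's argument (faithful sub-representation of a bounded-degree grid, then \rft{representationhaven}), but your choice of a degree-$4$ grid does not deliver the stated bound: each vertex-subdigraph must be replaced by a strongly connected skeleton through as many ports as the degree of the corresponding grid vertex, so a degree-$4$ grid forces $4$-eulerianizable skeletons and hence a $6$-eulerianizable union with the $2$-eulerianizable edge-pieces. The paper uses the \emph{cubic} grid on $2h\times 4h$ vertices (maximum degree $3$), giving $3+2=5$. This is easily repaired, but as written Case A proves a weaker statement than the lemma.

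Case B contains the real gap, and it is exactly the one you flag in your last sentence. Your construction asks each hub $\nu(z_j)$ to carry $h$ radial connections between consecutive closed walks; since $\nu(z_j)$ is only strongly connected (it could be a single long directed cycle), those $h$ connections cannot in general be made vertex-disjoint, and any strongly connected skeleton through $2h$ ports is only $2h$-eulerianizable. So the subdigraph you build is not $5$-eulerianizable for any fixed bound, and no amount of ``careful routing'' inside an arbitrary strongly connected digraph fixes this. (Separately, the Ramsey bound needed to make $h$ cyclic orderings of $M$ hubs agree on $2h$ common hubs is not justified by the value of $k$ in the statement, which was tuned for a different argument.) The paper avoids the problem by never routing many connections through one hub: in outcome 2 of \rft{generalmarkededges} it restricts each vertex-subdigraph to a $2$-eulerianizable piece meeting just \emph{two} of the edge-subdigraphs, thereby obtaining a new representation of a complete graph in which the old $\epsilon$'s play the role of vertices and the restricted $\nu$'s play the role of edges, and then applies \rft{generalmarkededges} a second time. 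That second application yields either a faithful sub-representation (reducing to Case A) or $h+1$ pieces from each family, all $2$-eulerianizable, pairwise vertex-disjoint within each family, and with every piece of one family meeting every piece of the other; their union is $4$-eulerianizable, and it has a haven of order $h+1$ because any $h$ vertices miss one piece from each family. You need this second, role-swapping application (or an equivalent device) to close Case B.
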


\begin{proof}
Let $(D,{\cal V},{\cal E},\nu,\epsilon)$ be a representation of $K_k$.  We will label the edges
of the clique, adding a label $u\in V(K_N)$ to an edge $e\in E(K_N)$ if $u$ is not an end of
$e$ and $\epsilon ^{-1}(e)$ shares a vertex with $\nu ^{-1}(u)$.  We then apply
\rft{generalmarkededges} with $n=8h^2$, $m_1=N(8h^2,h,h)$, and $m_2=N(8h^2,h,h)(N(8h^2,h,h)-1)
/2$.  We first assume that outcome $1$ occurs.  Let $G$ be the cubic grid on $2h$ by $4h$
vertices.  We let ${\cal V'}$ be the sub-family of ${\cal V}$ corresponding to the vertices
used in the clique we have found.  We restrict ${\cal E'}$ so that $(D,{\cal V'},{\cal E'})$ is a
representation of $G$.  Since the maximum degree of a vertex in a cubic grid is $3$, we may
find a $3$-eulerianizable subdigraph of each $v\in{\cal V'}$ which meets the same members of
${\cal E'}$ as $v$.  The union $D'$ of the members of ${\cal E'}$ and these $3$-eulerianizable
subdigraphs is $5$-eulerianizable and has a faithful representation of $G$. By
\rft{representationhaven}, $D'$ has a haven of order $h+1$ as required.\\

We next consider the case when outcome $2$ of \rft{generalmarkededges} occurs.  Let
${\cal V'}\subseteq{\cal V}$ and ${\cal E'}\subseteq{\cal E}$ correspond to the vertices and
edges of outcome $2$.  Now, every member of ${\cal V'}$ shares a vertex with every member of
${\cal E'}$ and $\vert{\cal V'}\vert=\vert{\cal E'}\vert(\vert{\cal E'}\vert -1)/2$.  So, we may
create a bijection between the elements of ${\cal V'}$ and the pairs of elements of ${\cal E'}$
such that the element assigned to a pair intersects a vertex from each member of that pair.
Moreover, each member of ${\cal V'}$ can be restricted to a $2$-eulerianizable strongly
connected digraph (a subdigraph of the original element) which still meets both elements of
the assigned pair in ${\cal E'}$.  The collection ${\cal V''}$ of these $2$-eulerianizable
subdigraps makes $(D,{\cal E'},{\cal V''})$ a representation of $K_{N(t+1,t,t)}$. (Note that
${\cal E'}$ corresponds to the vertices and ${\cal V''}$ corresponds to the edges.)  We proceed
as in the beginning of this proof, forming the labeled clique and applying
\rft{generalmarkededges} with $n=8h^2$, $m_1=m_2=h+1$.  In case of outcome $1$ we proceed
as in the previous paragraph.  In case of outcome $2$, let ${\cal E'}_0\subseteq{\cal E'}$ and
${\cal V''}_0\subseteq{\cal V''}$ be the subdigraphs corresponding to the vertices and edges
in outcome $2$.  The union of these collections is $4$-eulerianizable and every member of
${\cal V''}_0$ shares a vertex with every member of ${\cal E'}_0$.  Moreover, any $h$ vertices
in $D$ are disjoint from a member of ${\cal V''}_0$ and a member of ${\cal E'}_0$.  It is not
difficult to see this means the union has a haven of order $h+1$.
\end{proof}


\section{Waste Not, Want Not}

In the remainder of the paper, we prove that planar digraphs with large tree-width have large
cylindrical grid minors.  Let $\Sigma$ be a closed disk and let $D$ be a digraph.
Let $T$, $B$, $L$, and $R$ be pairwise disjoint subsets of $V(D)$ with $\vert T\vert =\vert
B\vert$ and $\vert L\vert=\vert R\vert$.  Let the vertices in $B$ and $R$ have out-degree
$1$ and in-degree $0$ and those in $T$ and $L$ have in-degree $1$ and out-degree $0$.  Suppose
that $D$ is drawn in $\Sigma$ with no edge crossings and such that the vertices drawn on
the boundary of the disk are exactly those in $T$, $B$, $L$, and $R$.  Furthermore, in the
counter-clockwise orientation of the disk, suppose all the vertices in $T$ are followed by
those in $L$, $B$, and $R$ respectively.  We say that $D$ is \dfn{$\{T,L,B,R\}$ disk-embedded}
in $\Sigma$.  Given such an embedding, if there is a simple curve with both ends on the
boundary of $\Sigma$ which separates $L$ and $R$, the side that contains $R$ will be called
the \dfn{right} side and the other the \dfn{left} side.  We may then define a left-right
order on a set of disjoint curves of this kind.  We define \dfn{top}, \dfn{bottom}, and a
top-bottom order in a similar way.\\

Let $P$ be a directed path containing vertices $u$,$v$ such that there is a sub-path $P'$
from $u$ to $v$.  We denote $P'$ by $P[u,v]$.  Given two directed paths $P_1$, $P_2$ of a
digraph, we say that they hit in \dfn{reverse} if there do not exist vertices $v_1$, $v_2$
such that $v_1$ appears later than $v_2$ in both $P_1$ and $P_2$.  Consider a $\{T,L,B,R\}$
disk-embedded digraph $D$ and two edge-disjoint paths $P$ and $Q$ from a vertex in $R$ to
a vertex in $L$ and from a vertex in $B$ to a vertex in $T$ respectively.  We call $P$ a
\dfn{horizontal} path and $Q$ a \dfn{vertical} path.  Since $G$ is embedded, $P$ and $Q$
must share a vertex.  Let $u$ be such a vertex.  If the in-edge and out-edge of $u$ in $Q$
are both on the top or both on the bottom of $P$, we say that $Q$ \dfn{bounces} at $u$.
Otherwise, we say $Q$ \dfn{crosses} at $u$.  Note that if $e$ and $f$ are edges of $v$ with
the head of $e$ equal to the tail of $f$, $e$ and $f$ are on the same side of $P$ (top or
bottom) iff $p$ bounces at the head of $e$.\\

\begin{thm}\label{minunion} Let $D$ be $\{T,L,B,R\}$ embedded in a closed disk $\Sigma$.
Suppose that there are $\vert T\vert$ vertex-disjoint directed paths from $B$ to $T$ and
$\vert R\vert$ vertex-disjoint directed paths from $R$ to $L$.  There is a butterfly minor
$D'$ of $D$ with the following properties:\\
\begin{enumerate}
\item{}$D'$ consists of $\vert T\vert$ vertex-disjoint directed paths from $B$ to $T$ and
$\vert R\vert$ vertex-disjoint directed paths from $R$ to $L$.
\item{}The paths in (i) hit in reverse.
\item{}All vertical paths are to the left of the right-most original vertical path.  All
horizontal paths are below the top-most original horizontal path.
\end{enumerate}
\end{thm}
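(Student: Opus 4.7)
The plan is an extremal uncrossing argument.

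Let $Q^*$ be the rightmost of the given vertical paths and $P^*$ the topmost of the given horizontal paths; both are well-defined because each family is vertex-disjoint in the disk. Let $\mathcal{F}$ denote the collection of pairs $(\mathcal{P}, \mathcal{Q})$ where $\mathcal{P}$ is a family of $|R|$ pairwise vertex-disjoint directed paths from $R$ to $L$ lying weakly below $P^*$, and $\mathcal{Q}$ is a family of $|T|$ pairwise vertex-disjoint directed paths from $B$ to $T$ lying weakly to the left of $Q^*$. The original family witnesses $\mathcal{F} \ne \emptyset$. Choose $(\mathcal{P}, \mathcal{Q}) \in \mathcal{F}$ lexicographically minimizing first the total number of edges used by $\mathcal{P} \cup \mathcal{Q}$, and then a suitable secondary tiebreaking measure (for instance, the lexicographically smallest vector of individual path lengths under a fixed ordering). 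Take $D'$ to be the subdigraph of $D$ spanned by $\mathcal{P} \cup \mathcal{Q}$. Since any subdigraph of $D$ is trivially a butterfly minor (the contraction step is vacuous), $D'$ is a butterfly minor of $D$ satisfying (i) by the definition of $\mathcal{F}$ and (iii) by the containment conditions.

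For (ii), suppose for contradiction that some $P \in \mathcal{P}$ and $Q \in \mathcal{Q}$ fail to hit in reverse: there exist common vertices $v_1, v_2$ with $v_1$ strictly later than $v_2$ on both $P$ and $Q$. Among all such bad quadruples $(P, Q, v_1, v_2)$, choose one with $|E(P[v_2, v_1])|$ minimal. The extremal choice forces $P[v_2, v_1]$ to be internally disjoint from $Q[v_2, v_1]$ (else a proper sub-segment yields a shorter bad witness) and, using the planar embedding, to stay weakly to the left of $Q^*$ (any excursion across $Q^*$ would meet $Q^*$, producing a shorter bad witness involving $Q^*$). Without loss of generality $|E(P[v_2, v_1])| \leq |E(Q[v_2, v_1])|$, and we replace $Q$ by $Q' := Q[b, v_2] \cdot P[v_2, v_1] \cdot Q[v_1, t]$. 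Then $Q'$ is a directed $B$-to-$T$ path lying weakly left of $Q^*$, vertex-disjoint from the other verticals (again by the extremal choice), and uses no more edges than $Q$. In the strict case this contradicts minimality of the edge count; in the equality case, the symmetric swap (replacing $P$ by $P' := P[r, v_2] \cdot Q[v_2, v_1] \cdot P[v_1, l]$) combined with the secondary measure forces a strict decrease, again contradicting the extremal choice.

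\noindent\textbf{Main obstacle.} The principal difficulty is verifying that the minimality of $|E(P[v_2, v_1])|$ actually rules out \emph{every} potential intrusion into the planar region bounded by $P[v_2, v_1]$ and $Q[v_2, v_1]$: other paths $P'' \in \mathcal{P}$, other $Q'' \in \mathcal{Q}$, the reference paths $P^*$ and $Q^*$, and further common vertices of $P$ and $Q$ in between. Each scenario has to be handled by extracting from the alleged interference a strictly shorter bad quadruple, which relies delicately on the $\{T, L, B, R\}$ disk embedding to constrain how paths meet. A secondary challenge is the length-equality case, where the tiebreaking measure must be designed carefully so that at least one of the two symmetric swaps strictly decreases it. These are the standard but intricate planar case analyses that make up the body of ``waste not, want not'' arguments.
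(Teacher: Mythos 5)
Your extremal framework (minimize total edges, then uncross) is the same starting point as the paper, but you have a genuine gap at the very first step: the butterfly contraction is \emph{not} vacuous, and skipping it makes property (ii) unattainable. Recall the definition: two paths hit in reverse if no two vertices $v_1,v_2$ appear with $v_1$ later than $v_2$ on both. If a horizontal $P$ and a vertical $Q$ share an edge $e=(u,v)$, then $v$ appears later than $u$ on \emph{both}, so $P$ and $Q$ never hit in reverse as long as $e$ belongs to both. Such shared edges cannot in general be removed by rerouting: in the disk embedding every horizontal must cross every vertical, and if a forced crossing vertex $w$ has in-degree~$1$ (say), then any horizontal and any vertical through $w$ must both use the unique in-edge of $w$. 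In exactly this situation your swap is the identity map — you write $Q' := Q[b,v_2]\cdot P[v_2,v_1]\cdot Q[v_1,t]$ with $v_2=u$, $v_1=v$, and $P[u,v]=Q[u,v]=e$, so $Q'=Q$ — and no tiebreaking measure can make a no-op a strict decrease. The paper's proof resolves this by butterfly-contracting every edge that lies on both a vertical and a horizontal (this is a legal contraction precisely because, after deleting unused edges, such an edge is the unique out-edge of its tail or the unique in-edge of its head), and only then argues about hitting in reverse.

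Beyond that, the part you flag as the ``main obstacle'' is in fact the content of the paper's proof, and your swap heuristic does not supply it. You assert that minimality of $|E(P[v_2,v_1])|$ forces $P[v_2,v_1]$ to avoid the other verticals and to stay left of $Q^*$, but minimality of a \emph{sub-path} length does not control which other paths $P[v_2,v_1]$ visits; a shorter sub-path can still thread through several verticals, and a swap along it can destroy vertex-disjointness of the vertical family or push a path right of $Q^*$. The paper instead sets up an induction on the number $k$ of bottom-most horizontals known to hit all verticals in reverse, and the heart of the argument is a delicate topological analysis (split by whether the offending sub-path lies to the left or right of $Q$, using what the already-processed horizontals are permitted to do) showing that a genuinely shorter global routing exists — a contradiction to the edge-count minimum, not to a per-segment minimum. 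Your secondary ``lexicographic vector of path lengths'' tiebreaker is also not well-defined as stated (the paths are not a priori ordered until you fix an embedding-based order, and in the tie case you would need to verify that at least one of the two symmetric swaps strictly decreases it, which you do not do). So while the extremal set-up matches the paper, both the missing contraction step and the unresolved uncrossing mechanics leave the proof incomplete.
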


\begin{proof}
We consider a routing of the two sets of paths using the fewest number of edges subject to
property $3$ and delete all unused vertices and edges.  We butterfly contract any edge
appearing in two paths (necessarily a vertical and horizontal path).  We will prove by
induction on $k$ that the $k$ bottom-most horizontal paths hit all the vertical paths in
reverse.  The base case of $k=0$ is trivial.\\

Suppose for the sake of a contradiction that the $k$ bottom-most horizontal paths hit all
the vertical paths in reverse but the $(k+1)^{st}$ horizontal path does not.  We label this
path $P$.  Let $Q$ be some vertical path which $P$ does not hit in reverse.  There is a
sub-path $P'$ of $P$ that intersects $Q$ precisely at its end-vertices but does not hit
$Q$ in reverse.  $P'$ lies either to the left or right of $Q$.  We may assume that $Q$ and
$P'$ are chosen so that $P'$ lies to the right of $Q$ if possible and, subject to that, that
$P'$ is minimal (allowing the possibility of changing vertical paths).  Let $P'$ start at the
vertex $v_1$ and end at $v_2$.\\

Suppose first that $P'$ lies to the right of $Q$. We claim that none of the $k$ bottom-most
horizontal paths can have a vertex on $Q$ between $v_1$ and $v_2$.  Suppose that this is
false and let $P_1$ be one of the $k$ bottom-most horizontal paths which does have such
a vertex.  We next suppose that $P_1$ crosses $Q$ between $v_1$ and $v_2$.  Let $u$ be the
first vertex on $P_1$ which is a vertex where $P_1$ crosses $Q$ between $v_1$ and $v_2$.
Since $P$ and $P_1$ are vertex-disjoint and $P'$ lies to the right of $Q$, $P_1$ must cross
from the left of $Q$ to the right at $u$.  So, the edge on $Q$ with tail is $u$ is on the
bottom of $P_1$.  But by induction and our choice of $u$, $P_1$ does not cross $Q$ between
$u$ and $v_2$.  So, $v_2$ is on the bottom of $P_1$, which contradicts the fact that $P$ is
on the top of $P_1$.  So, $P_1$ must not cross $Q$ between $v_1$ and $v_2$.  Now let $u$ be
any vertex of $P_1$ that is on $Q$ between $v_1$ and $v_2$.  We have just shown that $P_1$
must bounce at $u$, and it is not difficult to see (since $P_1$ doesn't cross $Q$ between
$v_1$ and $v_2$) that $P_1$ must be to the left of $Q$ before (and after) this bounce.  If
the edge on $Q$ with tail $u$ is above $P_1$, $P_1$ and $Q$ can not hit in reverse, a
contradiction to the inductive hypothesis.  So, the edge on $Q$ with tail $u$ is below
$P_1$.  Since $P_1$ doesn't cross $Q$ between $v_1$ and $v_2$, we again reach the
contradiction that $v_2$ is on the bottom of $P_1$.  Therefore, none of the $k$ top-most
horizontal paths can have a vertex on $Q$ between $v_1$ and $v_2$.\\

If there is a vertex $u\in Q[v_1,v_2]$ such that $P$ crosses $Q$ at $u$, let $u_0$ be the
earliest such vertex on $P$.  If $u_0$ exists and occurs on $P$ before $v_1$, let $v=u_0$.
Otherwise, let $v=u$.  We claim that the edge $e$ on $Q$ with tail $v$ is on bottom of $P$.
In the case that $v\neq v_1$, the claim follows because the first crossing must be from the
left to the right.  When $v=v_1$, we may assume that $P$ bounces at $v_1$ as otherwise the
claim holds for the same reason as when $v\neq v_1$.  Now the claim holds as otherwise $P$
must bounce at $v_1$ in such a way that $u_0$ must exist and occur on $P$ before $v_1$.\\

Starting from $e$ we follow $Q$ until we reach either $v_2$ or a vertex where $P$ crosses
$Q$.  As we are always staying on the bottom of $P$, we will not encounter any vertices on
paths above $P$, and we have already shown that we will not encounter any vertices on paths
below.  If we reach $v_2$, the sub-path of $Q$ we have followed gives a way to re-route $P$
(since $u$ comes before $v_2$) without destroying the other horizontal paths while maintaining
property $3$.  Moreover, the edge on $P$ with tail $v$ is no longer needed, a contradiction
to minimality.  If we reach a crossing before $v_2$, the crossing vertex occurs later on $P$
than $v$ by the choice of $u$, and so again we may re-route.\\

This concludes the case when the sub-path of $P$ between $v_1$ and $v_2$ lies to the right
of $Q$.  The remaining case is when it lies to the left.  If this sub-path does not contain
any vertices in its interior on any vertical path, we may re-route $Q$ along this sub-path
while maintaining property $3$, contradicting minimality.  Let $Q_1$ be the vertical path
immediatly to the left of $Q$.  Let $v_3$ be the first vertex on $P$ after $v_1$ that is on
$Q_1$ and let $v_4$ be the last vertex on $P$ before $v_2$ that is on $Q_2$. (These exist by
what we have just said.) If $v_3$ and $v_4$ are distinct, $v_3$ must occur on $Q_1$ before
$v_4$.  We then have a contradiction to our choice of $P'$.  So, let $u=v_3=v_4$.  Now,
there must be a vertex on $P$ after $v_2$ that is also on $Q_1$.  Let the first such vertex
be $x$.  By our choice of $P'$, $x$ must occur on $Q_1$ before $u$.  Consider the directed
circuit obtained by concatenating $P[u,x]$ with $Q_1[x,u]$. (There are no repeated vertices
due to the choice of $x$.) Since $P$ does not intersect itself and begins in $R$, there is
a vertex $y$ on $P$ before $u$ that is on $Q_1$ before $u$.  But since $v$ appears on $P$
before $u$, $y$ also appears on $P$ before $u$.  So $P[y,u]$ shows that $P$ and $Q_1$ do not
hit in reverse.  Moreover, this sub-path lies to the right of $Q_1$, contradicting our choice
of $P'$.
\end{proof}

We will need a version of the above theorem for the case when $D$ is embedded in a cylinder
(disk with a hole removed) rather than a disk. Let $\Sigma$ be a closed cylinder and let $D$
be a digraph.  Let $T$ and $B$ be pairwise disjoint subsets of $V(D)$ with $\vert T\vert=
\vert B\vert$.  Let the vertices in $T$ have out-degree $1$ and in-degree $0$ and those in
$B$ have in-degree $1$ and out-degree $0$.  Suppose that $D$ is embedded in $\Sigma$ such
that the vertices drawn on one boundary of the cylinder are exactly those in $T$ and those
drawn on the other are exactly those in $B$.  We say that $D$ is \dfn{$\{T,B\}$ cylinder-
embedded} in $\Sigma$.
  
\begin{thm}\label{cylinderreroute} Let $D$ be $\{T,B\}$ cylinder embedded in $\Sigma$.
Suppose that there are $\vert T\vert$ vertex-disjoint directed paths in $D$ from $T$ to $B$.
Suppose further that there is a family ${\cal F}$ of vertex-disjoint directed circuits that
each separate one of the boundaries of the cylinder from the other and which are consistently
oriented.  There is a butterfly minor of $D$ with these paths and circuits but with the
following additional property:\\

\noindent If $Q$ is any of the directed circuits and $P$ is a minimal sub-path of one
of the paths containing two vertices of $Q$, the union of $P$ and $Q$ contains a directed
circuit which does not separate the two boundaries. (Briefly, if the circuits go counter-
clockwise around the hole in the middle, the paths go clockwise.)
\end{thm}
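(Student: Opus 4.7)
The plan is to mimic the proof of Theorem \ref{minunion}, adapting the re-routing argument to the cylinder. Consider a butterfly minor of $D$ realizing the $\vert T\vert$ vertex-disjoint directed paths from $T$ to $B$ together with the family $\mathcal{F}$ of circuits, chosen so that the total number of edges used by the paths and circuits is as small as possible; butterfly-contract any edge that lies in both a vertical path and a circuit (such an edge is necessarily oriented the same way in both). I claim that this minimum-edge realization automatically satisfies the stated property.

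Suppose for contradiction that it does not. Then there is a circuit $Q\in\mathcal{F}$, a vertical path $P_0$, and a minimal sub-path $P=P_0[u,v]$ whose endpoints $u,v$ lie on $Q$ and whose interior is disjoint from $Q$, such that the directed cycle $C=P[u,v]\cup Q[v,u]$ is separating on $\Sigma$. Thus $C$ bounds a closed disk region $\Delta\subset\Sigma$ disjoint from at least one of the two boundary circles. Any other vertical path or circuit that enters $\Delta$ must leave through $\partial\Delta\subseteq P_0\cup Q$, and by vertex-disjointness its entry points lie on the interiors of $P[u,v]$ or $Q[v,u]$.

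The core of the argument is a case analysis, modelled on that of \ref{minunion}, according to whether $P_0$ crosses or bounces at $u$ and $v$. In each case one of two re-routings is available: either replace $P[u,v]$ inside $P_0$ by the complementary arc $Q[u,v]$ (which runs around $Q$ outside $\Delta$), or replace $Q[v,u]$ inside $Q$ by $P[u,v]$ (which runs along $P_0$ outside $\Delta$). Because $\Delta$ is a disk region bounded by $C$, every piece of a path or circuit lying in the interior of $\Delta$ can be pushed onto $\partial\Delta$ and so becomes redundant after the swap, producing a realization using strictly fewer edges and contradicting the minimality.

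The main obstacle, and the reason this is a little more delicate than \ref{minunion}, is checking that the re-routings preserve the required global structure: every circuit of $\mathcal{F}$ must still separate the two boundaries and be consistently oriented with the rest, and the vertical paths must still form a vertex-disjoint collection from $T$ to $B$. The key topological point is that replacing an arc by another arc with the same endpoints, both of which lie on $\partial\Delta$, does not change homology classes on $\Sigma$, because the two arcs together bound the disk $\Delta$. Hence separating circuits remain separating, the consistent orientation is preserved, and the vertical paths remain monotone from $T$ to $B$. Running through the case analysis with this topological check at each step completes the proof.
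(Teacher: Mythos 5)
There is a fundamental topological error at the heart of your argument. You suppose a counterexample exists, with a circuit $Q$, a vertical path $P_0$, a minimal sub-path $P=P_0[u,v]$ with interior disjoint from $Q$, and you say that the directed circuit $C=P[u,v]\cup Q[v,u]$ ``is separating on $\Sigma$. Thus $C$ bounds a closed disk region $\Delta\subset\Sigma$.'' But this is exactly backwards. In an annulus, a simple closed curve either bounds a disk (is null-homotopic), in which case \emph{both} boundary circles lie in the other component and $C$ does \emph{not} separate them; or it is isotopic to the core circle, in which case the two boundary circles lie in different components and $C$ \emph{does} separate them, but each component is itself an annulus, not a disk. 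The failure case of the theorem is precisely the second one: $C$ separates the boundaries, hence is a core circle, hence bounds no disk $\Delta$. Your entire re-routing strategy---``push everything inside $\Delta$ onto $\partial\Delta$''---is built on a region that does not exist. Independently of this, the two re-routings you offer are problematic: option 1 (routing $P_0$ along the arc $Q[u,v]$) can collide with other vertical paths, and option 2 (replacing $Q[v,u]$ by $P[u,v]$) substitutes a $v\to u$ arc by a $u\to v$ path, which destroys the directed circuit.

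The paper's own proof takes a different and essential step that your sketch omits: pass to the universal cover of the cylinder, a bi-infinite strip. After lifting, each circuit of $\mathcal{F}$ becomes a bi-infinite ``horizontal'' path and each $T$-$B$ path lifts to an infinite family of ``vertical'' paths, and the situation becomes the $\{T,L,B,R\}$ disk-embedded setting of Theorem~\ref{minunion}, whose proof (minimality of edges together with an induction on the $k$ bottom-most horizontals, with a crossing/bouncing case analysis) then applies essentially verbatim, the infinite number of verticals doing no harm. Even in the disk case, the proof of Theorem~\ref{minunion} is not a one-shot ``minimality plus swap across a disk'' argument; the induction and the left/right, cross/bounce case analysis are doing real work that your sketch does not reproduce. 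You should replace your disk-region argument by the universal-cover reduction and then import the argument of Theorem~\ref{minunion} in detail.
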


\begin{proof} The proof is essentially the same as above.  The circuits in ${\cal F}$
assume the role of the horizontal paths in the previous proof and the $T$-$B$ paths assume
the role of the verticals.  Note that there is still a notion of top and bottom, which is
important to the previous proof.  We leave the conversion to the reader. (One possibility
is to cover the cylinder in the natural way with a bi-infinite strip, lift $D$ to this
cover, and note that having an infinite number of vertical paths does not impede the proof
in this case.)
\end{proof}


\section{No Cycles in the Proof}

In this section we present two lemmas for constructing \dfn{acyclic grids}.  An
acyclic grid of size $n$ is a digraph consisting of an ordered set of $n$ vertex-disjoint
``horizontal" directed paths and an ordered set of $n$ vertex-disjoint ``vertical" directed
paths.  We require that each vertical and horizontal path share exactly one vertex.  Moreover,
each vertical path hits the horizontal paths in order and similarly for the horizontal paths.

\begin{thm}\label{getagrid} Let $n$ be a positive integer.  There exists a number $v$
such that the following holds: Let $D$ be a digraph, $h=(3n-2)n$, and let $L$, $R$, $T$,
and $B$ be disjoint subsets of $V(G)$.  Let $\vert L\vert =\vert R\vert =h$ and $\vert T
\vert =\vert B\vert =v$.  Suppose that $D$ is $\{T,L,B,R\}$ disk embedded in $\Sigma$
with $h$ vertex-disjoint directed paths from $R$ to $L$ (horizontals) and $v$ vertex-disjoint
directed paths from $B$ to $T$ (verticals).  Suppose further that the horizontals and verticals
hit in reverse.  Then $D$ contains an acyclic grid of size $n$ that satisfies the following
properties:\\
\begin{enumerate}
\item{}Each horizontal path is contained below the top-most of the original horizontal
paths and begins in $R$.
\item{}Each vertical path is contained to the left of the right-most of the original
vertical paths and ends in $T$.
\end{enumerate}
\end{thm}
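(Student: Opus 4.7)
The plan proceeds in three stages: define a natural system of grid vertices, use the disk topology to order them correctly, and then extract a clean subgrid using the abundance of paths.

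For the first stage, index the horizontals $H_1,\dots,H_h$ from top to bottom and the verticals $V_1,\dots,V_v$ from left to right. For each pair $(a,b)$, let $y_{a,b}$ be the topmost vertex of $V_b$ lying on $H_a$. By the hit-in-reverse hypothesis applied to $H_a$ and $V_b$, this is simultaneously the rightmost vertex of $H_a$ lying on $V_b$. Because $V_b$ is a path in the disk from $B$ to $T$, it separates the region containing $R$ from the region containing $L$, so every horizontal meets every vertical and $y_{a,b}$ exists for all $(a,b)$.

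The second stage is to observe that the $y_{a,b}$'s are already ordered the way an acyclic grid demands. A topological argument using the disk embedding shows that if $a<a'$ then $y_{a,b}$ lies strictly above $y_{a',b}$ on $V_b$: past $y_{a,b}$ the path $V_b$ enters the region bounded above by $H_a$ (and parts of the disk boundary), and since the horizontals are pairwise vertex-disjoint, $H_{a'}$ lies entirely outside that region, forcing every intersection of $V_b$ with $H_{a'}$ (in particular $y_{a',b}$) to occur strictly below $y_{a,b}$. The symmetric argument gives that if $b<b'$ then $y_{a,b}$ lies strictly left of $y_{a,b'}$ on $H_a$. Consequently, going up $V_b$ the canonical vertices appear in decreasing-$a$ order, and going leftward along $H_a$ they appear in decreasing-$b$ order --- exactly the common ordering needed for the two families of sub-paths to form an acyclic grid.

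The main obstacle remaining is that the sub-path of $H_{a}$ between two consecutive chosen canonical intersections can re-cross other chosen verticals at non-canonical points, violating the requirement that a horizontal and a vertical of the grid share \emph{exactly} one vertex. To handle this I would exploit the freedom to make $v$ as large as needed. First apply an Erd\H{o}s--Szekeres / Ramsey extraction on the verticals to pick $n$ verticals $V_{b_1},\dots,V_{b_n}$ whose re-crossing patterns across all horizontals are monotone/structured. Then partition the $h=(3n-2)n$ horizontals into $n$ consecutive blocks of $3n-2$ horizontals each; within each block, use a local pigeonhole argument on the locations of re-crossings to produce one horizontal whose sub-path from $R$ down to $y_{\cdot,b_1}$ meets each chosen vertical sub-path only at the canonical point. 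The constant $3n-2=3(n-1)+1$ should afford exactly the three-way pigeonhole needed to rule out, for each of the other $n-1$ rows, the three types of bad positioning of a non-canonical crossing.

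Once $n$ horizontals and $n$ verticals have been chosen so that only the canonical intersections survive, the grid is assembled: the $i$-th horizontal is the sub-path of $H_{a_i}$ from its start in $R$ to $y_{a_i,b_1}$, and the $j$-th vertical is the sub-path of $V_{b_j}$ from $y_{a_n,b_j}$ up to $T$. Containment properties (1) and (2) hold by construction, and the ordering established in the second stage makes the grid acyclic. The hard step is the re-crossing analysis in the third stage: verifying both the Erd\H{o}s--Szekeres extraction on the verticals and the $3n-2$ pigeonhole inside each horizontal block, and checking that together these indeed eliminate every non-canonical shared vertex.
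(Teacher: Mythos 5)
Your stages 1 and 2 are sound and correspond to the paper's setup (the canonical points $y_{a,b}$ are the paper's $f_i^j$/$l_i^j$ data, and the ordering facts you assert do follow from planarity and hit-in-reverse). The gap is stage 3, which is the entire difficulty of the theorem, and the plan you sketch there will not work. Truncating $H_{a_i}$ at $y_{a_i,b_1}$ only cleans up the leftmost chosen vertical: for $j>1$ the non-canonical intersections of $V_{b_j}$ with $H_{a_i}$ all lie between $y_{a_i,b_j}$ and the left end of $H_{a_i}$, hence can sit inside your chosen horizontal sub-path, interleaved with the other canonical points. Worse, a single vertical can recross a single horizontal arbitrarily many times (e.g.\ by repeatedly dipping back below it between consecutive rows), and it can do this to \emph{every} horizontal simultaneously, so no pigeonhole over $3n-2$ rows per block can produce a row that is ``clean'' for all chosen verticals --- badness is not distributed one-row-at-a-time. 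In short, an acyclic grid whose horizontals \emph{and} verticals are both sub-paths of the original paths need not exist, so any argument that only selects sub-paths is doomed.

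The paper's proof resolves this by \emph{re-routing} one of the two families as staircases that alternate between pieces of horizontals and pieces of verticals, so that the new path leaves each crossing point before it can meet the same transversal path again. Which family to re-route is decided by a Ramsey-type dichotomy you are missing: on each horizontal, the departure points $f_i^j$ and arrival points $l_i^{j-1}$ of the verticals are either \emph{integrated} (alternating) or \emph{segregated} (all departures before all arrivals), and for large $v$ one extracts $3n-2$ verticals with a uniform pattern on every row. With $h=(3n-2)n$ rows one then finds either $3n-2$ segregated rows (keep the verticals, build $n$ disjoint horizontal staircases) or $n$ consecutive integrated rows (keep the horizontals, build $n$ disjoint vertical staircases). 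The constant $3n-2$ is the number of rows consumed by $n$ vertex-disjoint staircases that each climb $n-1$ levels starting two levels apart --- it has nothing to do with a three-way pigeonhole on crossing positions. To repair your proof you would need to abandon the ``both families are sub-paths'' template, introduce the integrated/segregated dichotomy (or an equivalent), and carry out the staircase construction.
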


\begin{proof}
We label the horizontal paths from bottom to top as $H_1$, $H_2$, ..., $H_h$ and
label the vertical paths from left to right as $V_1$, $V_2$, ..., $V_v$.  For all $1\leq
i\leq v$ and $0\leq j\leq h$, let $V_i^j$ denote the sub-path of $V_i$ that:\\
\begin{enumerate}
\item{}begins at a vertex in $H_j$ (or $B$ in case $j=0$)
\item{}ends at a vertex in $H_{j+1}$ (or $T$ in case $j=h$)
\item{}is otherwise vertex-disjoint from the horizontal paths
\item{}occurs as late as possible along $V_i$
\end{enumerate}
Note that the paths $V_i^j$ for fixed $j$ occur from left to right in the disk between
$H_j$ and $H_{j+1}$ in the order $V_1^j$, $V_2^j$, ..., $V_v^j$.\\

We are interested in how the ends of the $V_i^{j-1}$ paths ``mix" with the starts of the
$V_i^j$ paths.  To this end, $\forall  1\leq i\leq v$ and $\forall  0\leq j\leq h$ let
$f_i^j$ denote the first vertex in $V_i^j$ and $l_i^j$ denote the last vertex in $V_i^j$.
Since the paths hit in reverse, $f_i^j$ occurs on $H_j$ before $l_i^{j-1}$.  For a subset
$S=\{i_1, ..., i_k\}$ of the vertical paths, $S$ is \dfn{integrated} at $j$ if the vertices
$f_{i_k}^j$, $l_{i_k}^{j-1}$, $f_{i_{k-1}}^j$, $l_{i_{k-1}}^{j-1}$, ..., $f_{i_1}^j$,
$l_{i_1}^{j-1}$ appear along $H_j$ in that order.  $S$ is \dfn{segregated} if the vertices
$f_{i_k}^j$, $f_{i_{k-1}}^j$, ..., $f_{i_1}^j$, $l_{i_k}^{j-1}$, $l_{i_{k-1}}^{j-1}$, ...,
$l_{i_1}^{j-1}$ appear along $H_j$ in that order.  For each $1\leq i\leq v$, we consider
the subpath of $H_j$ from $f_i^j$ to $l_i^{j-1}$.  It is well known that given many sub-paths
of a path, there are either many disjoint sub-paths or many sub-paths with common
intersection.  Combining this with the observation following the definition of $V_i^j$, we
see that:\\

\noindent{\it Claim:} For a fixed $j$ and any $m$, there is an integer $M$ such that a subset
$S$ of the $V_i$ of size at least $M$ contains a subset $S'$ of size at least $m$ that is
either integrated or segregated on $H_j$.\\

By iteratively applying this claim for all $1\leq j\leq h$, we see that we may choose
$v$ large enough such that there is a subset $S$ of the vertical paths of size $3n-2$ that is
either segregated or integrated on each of the horizontal paths.  We restrict the set of
vertical paths to $S$. (Paths outside $S$ are no longer considered ``vertical".) If we
remove some $H_{j_0}$ from the set of horizontal paths, we may relabel the set of
horizontal paths and then redefine the $V_i^j$.  The terminals of these paths remain the
same on all $H_j$ with $j\neq j_0$ except that the ends of the old $V_i^{j_0}$ on
$H_{j_0+1}$ (the $l_i^{j_0}$) are replaced by the ends of the new $V_i^{j_0-1}$ sub-paths
that go from $H_{j_0-1}$ to $H_{j_0+1}$ (the new $l_i^{j_0-1}$).  Since the paths hit in
reverse, for every $1\leq i\leq v$ the new $l_i^{j_0-1}$ occurs later on
$H_{j_0+1}$ than the old $l_i^{j_0}$.  So, if a subset of the vertical paths is
segregated on $H_{j_0+1}$, they remain so if $H_{j_0}$ is no longer considered a
horizontal path.\\

Since $h=n(3n-2)$, there are either $3n-2$ horizontal paths on which the
vertical paths are segregated or there are $n$ consecutive horizontal paths on which
the vertical paths are integrated.  We first suppose the former.  In this case, we
restrict the set of horizontal paths to some $3n-2$ on which the verticals are
segregated and restrict the verticals to an arbitrary subset of size $n$. (The
segregation remains by the previous paragraph.) We next select sub-paths of the
horizontal paths.  For a horizontal path $H_j$, we keep the start of $H_j$ (in $R$)
but change the end to the last (in the sense of $H_j$) $f_i^j$.  We refer to these
sub-paths as the horizontal paths and keep the order on these paths induced by the
original horizontal paths (lowest, highest, etc.) We claim that a vertical path $V_i$
may not hit some horizontal path $H_j$ and then hit a $H_{j'}$ with $j'<j$.  Otherwise,
there are vertices $u_1$ and $u_3$ on $V_i$ and $H_j$ and a vertex $u_2$ on $V_i$ and
$H_{j'}$ occuring on $V_i$ between $u_1$ and $u_3$.  Note that one of $u_1$, $u_3$ must
occur on $V_i$ before $f_i^{j-1}$.  But any such vertex must appear on the original $H_j$
(before we took sub-paths) after $l_i^{j_0}$ (since the paths hit in reverse).  So, any
such vertex is not included in the sub-path of $H_j$ that we have selected.\\

Therefore, each vertical path begins at a vertex in $B$, travels to the lowest horizontal
path, perhaps ``bubbles" backwards along this path (it does not bubble forward because
it hits the horizontal in reverse), and then proceeds to the next horizontal path, never
to return to the one it has just left.  Finally, the path ends at a vertex in $T$.  Since
we have not changed any of the vertical paths, property $2$ automatically follows.  To
complete the theorem in this case, we need to re-route the horizontal paths consistent
with property $1$.  To do so, we select the lowest horizontal path and travel from its
start to the first vertex we encounter along the right-most vertical path. We then travel
along this path until we get to the next highest horizontal path.  We then follow this
horizontal path to the next right-most vertical path, follow that vertical path to the next
horizontal path, and so on until we reach a vertex in the left-most vertical path.  The
path we have traveled will be the first of our final horizontal paths.  For the next one,
we start at the third lowest horizontal path (we start at the third instead of the
second to keep the paths vertex-disjoint) and repeat the procedure.  Since $V_i^j$ is below
the top-most horizontal for all $i$ and $j$ (except for those going from the top-most
horizontal to $T$), this procedure produces horizontal paths satisfying the theorem.\\

So, we may assume that there are $n$ consecutive horizontal paths on which the
$3n-2$ vertical paths are integrated.  We relabel these horizontal paths as $H_1$,...
$H_n$ (in ascending order) and relabel the vertical paths as $V_1$,...,$V_{3n-2}$ (left
to right).  For $1\leq i\leq 3n-2$ and $1\leq j\leq n$, we define $V_i^j$ as before.
(Note we are not defining $V_i^0$ for any $i$.) Defining $f_i^j$ and $l_i^j$ as before,
we see that for $2\leq j\leq n$ $l_{3n-2}^{j-1}$, $f_{3n-2}^j$, $l_{3n-3}^{j-1}$,
$f_{3n-3}^j$, ..., $l_1^{j-1}$, $f_1^j$ occur along $H_j$ in that order. (The only
interesting case is when $j=n$ since it is the one on the ``border" with non-integrated
original horizontal paths.) The horizontal paths of our grid will be the $n$ horizontal
paths we have selected.  For the right-most vertical path, we begin at $f_{3n-2}^1$ and
follow $V_{3n-2}^1$ to $l_{3n-2}^1$.  We then follow $H_2$ to $f_{3n-3}^2$.  Note that
we stay to the left of $V_{3n-2}$ as we travel along $H_2$.  We follow $V_{3n-3}^2$ from
$f_{3n-3}^2$ to $l_{3n-3}^2$ and then repeat the process until we follow $V_{2n-1}$ from
$f_{2n-1}^n$ to a vertex in $T$.  The path we have followed remains on the left of
$V_{3n-2}$ and so satisfies property $2$.  For the next vertical path we start at
$f_{3n-4}^1$ (again we ``skip" to make the new vertical paths vertex-disjoint) and proceed
as before.  The last vertical path begins at $f_n^1$ and ends by traveling along $V_1^n$
from $f_1^n$ to a vertex in $T$.
\end{proof}

Before presenting the next lemma, we need a definition.  A \dfn{bubble acyclic grid}
of size $n$ is an ordered set of $n$ vertex-disjoint ``horizontal" paths and an ordered
set of $n$ vertex-disjoint ``vertical" paths such that each vertical path hits all
horizontal paths and does so monotnoically with respect to the order on the
horizontals. (Note that we do not make such a restriction on the vertical paths.)
Moreover, we require that all paths hit in reverse.  So, a bubble acyclic grid is the
same as an acyclic grid except we allow the vertical paths to ``bubble" along a horizontal
before proceeding to the next.\\

\begin{thm}\label{getagrid2} Let $n$ be a positive integer.  There exists a number $h$
such that the following holds: Let $D$ be a digraph, $\Sigma$ a closed disk, and let $L$, $R$, $T$,
and $B$ be disjoint subsets of $V(G)$.  Let $\vert L\vert =\vert R\vert =h$
and $\vert T\vert =\vert B\vert =n$.  Suppose that $D$ is $\{T,L,B,R\}$ disk embedded
in $\Sigma$ with $h$ vertex-disjoint directed paths from $R$ to $L$ (horizontals) and $n$
vertex-disjoint directed paths from $B$ to $T$ (verticals).  Suppose further that the
horizontals and verticals hit in reverse.  Then $D$ contains a bubble acyclic grid of
size $n$ where each horizontal path is an original horizontal path.
\end{thm}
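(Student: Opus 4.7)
The plan is to exploit the planarity of the disk embedding to extract a staircase structure on each vertical, and then to select horizontals by a multi-coloured Ramsey argument. For each vertical $V_i$ and horizontal $H_j$ let $f_i^j$ and $l_i^j$ denote the first and last vertex of $V_i$ on $H_j$. Since $H_j$ separates the disk into an ``above'' side (containing $T$ and every $H_k$ with $k>j$) and a ``below'' side (containing $B$), and $V_i$ runs from $B$ to $T$, the path $V_i$ must hit every $H_j$. Moreover, for $j<k$, the first visit to $H_j$ must precede the first visit to $H_k$ (you cannot reach $H_k$ from $B$ without first crossing $H_j$), and the last visit to $H_j$ must precede the last visit to $H_k$ (after the last visit of $H_k$, $V_i$ stays above $H_k$ and therefore above $H_j$). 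Hence the intervals $I_{ij}=[f_i^j,l_i^j]$ along $V_i$ form a staircase: both endpoints are monotone in $j$.

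The bubble grid with horizontals $H_{j_1},\dots,H_{j_n}$ and original verticals $V_i$ exists as soon as, for every $i$ and every $k$, $l_i^{j_k}<f_i^{j_{k+1}}$ (i.e.\ the intervals $I_{i,j_k}$ are pairwise disjoint in order on $V_i$). Colour each pair $(H_j,H_k)$ with $j<k$ by the $n$-tuple $(c_1(j,k),\dots,c_n(j,k))\in\{M,I\}^n$, where $c_i(j,k)=M$ if $l_i^j<f_i^k$ and $c_i(j,k)=I$ otherwise. Choosing $h$ at least the Ramsey number $R_{2^n}(N)$ for some $N\gg n$ to be fixed later, Ramsey's theorem yields a monochromatic set of $N$ horizontals. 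If the colour is $(M,\dots,M)$, any $n$ of these horizontals give the desired bubble grid immediately (in fact an acyclic grid), using the original $V_i$ as verticals.

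In the remaining cases, the coordinates that equal $I$ identify verticals $V_i$ whose intervals $I_{ij}$ for $j$ in the monochromatic set pairwise overlap on $V_i$. Here we use sub-path freedom: the verticals of the bubble grid are only required to be directed paths, so we are allowed to replace each $V_i$ by a suitable sub-path (or butterfly minor). Because the intervals in question are a staircase and are pairwise overlapping, Helly's theorem for intervals on a line gives a common point $p_i$ on $V_i$ through which $V_i$ has already visited every selected $H_j$ and will visit every selected $H_j$ again; considering the visit sequence $s_i$ of $V_i$ and applying Erd\H{o}s--Szekeres to the monotone block structure around $p_i$, one extracts a sub-path of $V_i$ that visits a subset of $n$ horizontals in the correct order (bubbling allowed). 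Iterating this extraction vertical-by-vertical, at each stage shrinking the horizontal set so that every vertical processed so far still admits a monotone sub-path, yields a common set of $n$ horizontals provided $h$ was chosen large enough (tower-type in $n$) to survive $n$ rounds of Erd\H{o}s--Szekeres losses.

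The main obstacle is the last step: a single vertical whose visit sequence is heavily interleaved can forbid many triples of horizontals from being used simultaneously, so one must argue that after choosing a common monotone sub-path for one vertical, enough horizontals remain for the next vertical's monotone sub-path to be carved out. The staircase property of the intervals and the reverse-hit hypothesis are what make this quantitative control possible; the combinatorial bookkeeping is what drives $h(n)$ and is the genuinely technical part of the proof.
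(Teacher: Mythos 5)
Your staircase observation is correct and your all-\(M\) colour case works, but the rest of the argument has a real gap that is not just ``bookkeeping.'' The central problem is in the \(I\)-colour case. Suppose coordinate \(i\) is \(I\), so the intervals \(I_{i,j}\) on \(V_i\) pairwise overlap and Helly gives a common point \(p_i\). You then want to invoke Erd\H os--Szekeres on the visit sequence to extract a sub-path of \(V_i\) that hits a subset of the chosen horizontals monotonically. But Erd\H os--Szekeres produces a monotone \emph{subsequence} of visits, not a monotone \emph{sub-path}: the piece of \(V_i\) between two consecutive selected visits can perfectly well visit other horizontals that you have decided to keep, and out of order. For a concrete illustration, a vertical whose visit sequence to kept horizontals reads \(H_1, H_2, H_3, H_2, H_3\) has \(\{H_1,H_2,H_3\}\) as a monotone subsequence, yet \emph{no} sub-path of it hits \(\{H_1,H_2,H_3\}\) monotonically: the stretch from the first \(H_2\) to the last \(H_3\) contains \(H_3,H_2\) in that order, and you cannot simply forget those visits, since a bubble grid vertical must hit \emph{every} grid horizontal monotonically. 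So the per-vertical extraction step, as stated, is false, and the subsequent iterative scheme (shrinking the horizontal set vertical-by-vertical) inherits that problem. You acknowledge this is ``the genuinely technical part''; I would put it more strongly: it is the part of the proof, and the Helly + Erd\H os--Szekeres plan does not supply it.

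The paper proceeds by a different and more controllable dichotomy. Either some vertical \(P\) satisfies a strong zig-zag property (it alternates between two fixed horizontals \(H_1,H_2\) at distance \(d\) at least \(3n-2\) times); in that case one throws away all other verticals and uses \(3n-2\) disjoint sub-paths of \emph{that one} \(P\) as the new verticals, and because they are sub-paths of a single simple curve they are automatically ``integrated'' on each intermediate horizontal, so the re-routing argument from the acyclic-grid lemma (Theorem~\ref{getagrid}) produces an acyclic grid directly. Or no vertical has that property, and then after thinning to every \(d\)-th horizontal the quantity \(\mathrm{passes}(Q,P)\) (the number of times \(P\) touches \(Q\) and returns from above) is uniformly bounded by \(3n-2\); the proof then runs a double induction on \((k,l)\), increasing the prefix of horizontals on which \emph{every} vertical has \(\mathrm{passes}=1\), at each step replacing one offending vertical by a sub-path and deleting a controlled block of horizontals so the replacement's remaining reach is still large enough for the inductive hypothesis. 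This is precisely the quantitative control your sketch lacks: the paper never tries to make all verticals monotone on a common horizontal set in one Ramsey-style shot; it fixes one horizontal and one vertical at a time, paying an explicitly bounded price in deleted horizontals each round. If you want to salvage your approach, the missing idea is some analogue of the \(\mathrm{passes}\) bound and a mechanism, like the paper's Property~P, for turning a vertical with many passes into a \emph{resource} (many integrated sub-verticals) rather than an obstruction.
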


\begin{proof}
We first suppose that there are horizontal paths $H_1$, $H_2$ at distance $d$ (in
the top-bottom order) and a vertical path $P$ such that:\\

\noindent{\it Property {\cal P}:} There are vertices $h_1^1$, $h_2^1$, $h_1^2$, $h_2^2$,
..., $h_1^i$, $h_2^i$, ..., $h_1^{3n-2}$, $h_2^{3n-2}$ appearing on $P$ in that
order such that $h_1^i\in h_1$ and $h_2^i\in h_2$ $\forall 1\leq i\leq 3n-2$.\\

Since $P$ begins in $B$, we may assume that $H_1$ is below $H_2$.  For every
$1\leq i\leq 3n-2$, let $P_i$ denote the sub-path of $P$ beginning at $h_1^i$ and
ending at $h_2^i$ and set ${\cal V}=\{P_i:1\leq i\leq 3n-2\}$.  Let ${\cal H}$ be the
set of all horizontal paths between $H_1$ and $H_2$ inclusive.  ${\cal H}$ and
${\cal V}$ will be our new set of horizontal and vertical paths.  The new vertical
paths are sub-paths of an original vertical path that formed no directed digons
with the horizontal paths.  So, the vertical paths are integrated on each horizontal
path in the sense of the previous proof.  We may proceed to re-route the vertical
paths as in that proof to produce an acyclic grid of size $n$ with each horizontal
path an original horizontal path, proving the lemma in this case.\\

So, we may assume property {\cal P} fails for every vertical path.  Given a vertical
path $P$ and a horizontal path $Q$, we define $passes(Q,P)$ as the maximum number of distinct
vertices in $Q\cap P$ such that for any two of these vertices there is a vertex between
them on $P$ that is also on a horizontal path above $Q$. (So, $passes(Q,P)$ counts the
number of times $P$ hits $Q$ and then travels above $Q$.)  Since property ${\cal P}$
does not hold for any vertical path $P$, if we keep only every $d^{th}$ horizontal path
(ignoring the others) $passes(Q,P)<3n-2$ forall $Q$,$P$.  We will complete the proof
by proving the following claim by induction on $n^2-nk-l$.\\

\noindent{\it Claim:} There exists a positive integer $N(n,k,l)$ such that the following holds:
Suppose there are $n$ vertical paths, $N$ horizontal paths, and $passes(Q,P)<3n-2  \forall Q$,$P$.
In addition, suppose that $passes(Q,P)=1  \forall  P$ and each of the bottom $k$ horizontal
paths $Q$ and $passes(Q_0,P)=1$ for the left-most $l$ vertical paths $P$ and the $k+1$st
lowest horizontal path $Q_0$.  Then there is a bubble acyclic grid of size $n$ whose
horizontal paths are a subset of the original horizontal paths.\\

The claim is trivial if $n^2-nk-l\leq 0$.  Also, we may assume that $0\leq l<n$
by increasing $k$ (and setting $l=0$) in case $l=n$.  We will prove that $N(n,k,l)=(3n-3)
N(n,k,l+1)$ works.  Let $P$ be the $(l+1)^{st}$ vertical path and set $i=passes(Q_0,P)$.
Let $v_1$, ..., $v_i$ be a maximum set of vertices in $P\cap Q_0$ such that for any two
there is a vertex on $P$ between them and also on a horizontal path above $Q_0$.  We may
assume that these vertices appear along $P$ in that order.  For $1\leq j\leq i$, let
$height(j)$ denote the maximum height above $Q_0$ of a horizontal path which hits $P$
between $v_j$ and $v_{j+1}$.  Let $j_0$ be the first $j$ for which $height(j)\geq
N(n,k,l+1)j$.  We replace $P$ by the vertical path which has the same start as $P$ but
ends between $v_j$ and $v_{j+1}$ at the horizontal path of maximum height. (In case $j=i$
we keep the original $P$.) We also delete the $(j_{0}-1)N(n,k,l+1)$ horizontal paths
immediatly above $Q_0$.  With the new $P$, we have $passes(Q_0,P)=1$, and by the choice
of $j_0$ we may apply induction to complete the proof.
\end{proof}


\section{Getting the Goods}

In this section we begin the construction of the cylindrical grid.  We find the circuits
and the paths of the grid without too much regard to how they interact.  The following
sections will use the previous lemmas to straighten things out.

\begin{thm}\label{linkcuts} Let $n$ be a positive integer and $D$ an Eulerian digraph with
out-degree at most $6$.  Suppose $D$ has undirected vertex cuts (vertex cuts in the
underlying undirected graph) $C_1$ and $C_2$ such that the following hold:\\
\begin{enumerate}
\item{}$C_1$ and $C_2$ are disjoint, do not cross, and contain $7n$ vertices.
\item{}There is no undirected vertex cut of order less than $7n$ between $C_1$ and
$C_2$.
\end{enumerate}

Then, there are $n$ vertex-disjoint directed paths in $D$ from the vertices in $C_1$
to the vertices in $C_2$.
\end{thm}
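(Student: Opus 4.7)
The plan is to argue by contradiction using the directed vertex form of Menger's theorem, converting any small directed vertex cut into an undirected vertex cut that is smaller by a factor of $7$. Suppose $D$ contains fewer than $n$ vertex-disjoint directed paths from $C_1$ to $C_2$. Then by Menger there is a directed vertex cut $S\subseteq V(D)$ with $|S|\leq n-1$ such that $D-S$ contains no directed path from $C_1\setminus S$ to $C_2\setminus S$. I will build from $S$ an undirected vertex cut between $C_1$ and $C_2$ of size at most $7|S|\leq 7(n-1)<7n$, contradicting hypothesis~(2).

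Let $U$ be the set of vertices reachable in $D-S$ by a directed path from $C_1\setminus S$, and set $W=V(D)\setminus(U\cup S)$. By construction $U\cap S=\emptyset$, and the cut property forces $U\cap C_2=\emptyset$, so $C_1\setminus S\subseteq U$ and $C_2\setminus S\subseteq W$. No edge of $D$ can go from $U$ to $W$, since such an edge would extend reachability. Hence every edge of $\delta^-(U)$ (the edges leaving $U$) has its head in $S$, and because $D$ is Eulerian with out-degree at most $6$, every vertex has in-degree at most $6$, giving $|\delta^-(U)|\leq 6|S|$.

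Since $D$ is Eulerian, $|\delta^+(U)|=|\delta^-(U)|\leq 6|S|$. Let $B_W\subseteq W$ be the set of tails of edges in $\delta^+(U)$ that lie in $W$; each vertex of $B_W$ accounts for at least one such edge, so $|B_W|\leq |\delta^+(U)|\leq 6|S|$. Put $T:=S\cup B_W$; since $B_W$ is disjoint from $S$, we have $|T|=|S|+|B_W|\leq 7|S|<7n$.

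To finish, note that every undirected edge between $U$ and $V(D)\setminus U$ underlies either an edge of $\delta^-(U)$ (whose endpoint outside $U$ lies in $S$) or an edge of $\delta^+(U)$ (whose endpoint outside $U$ lies in $S\cup B_W$). Thus after deleting $T$ from the underlying undirected graph there is no edge between $U$ and $W\setminus B_W$, and since $C_1\setminus T\subseteq U$ while $C_2\setminus T\subseteq W\setminus B_W$, $T$ is an undirected vertex cut of size less than $7n$ between $C_1$ and $C_2$, contradicting hypothesis~(2). The crux, and the only place the exact constant $7$ matters, is the Eulerian trade of the vertex count $|B_W|$ for the edge count $|\delta^+(U)|$; the constant then breaks up as $7=1+6$, where the $1$ comes from $|S|$ itself and the $6$ from the in-degree bound.
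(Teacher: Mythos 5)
Your proposal is correct and follows essentially the same route as the paper: assume fewer than $n$ disjoint paths, take a directed vertex separator $S$ of size less than $n$ via Menger, use the Eulerian condition and the out-degree bound $6$ to bound the number of edges crossing back into the reachable set by $6|S|$, and absorb one endpoint of each such edge into the separator to get an undirected cut of order less than $7n$, contradicting hypothesis (2). Your write-up is in fact more careful than the paper's (which only sketches the edge count and the choice of added endpoints), so nothing further is needed.
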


\begin{proof}
If the conclusion of the theorem does not hold, there is a directed cut $C$ of
order less than $n$ separating $C_1$ from $C_2$ (edges may cross the cut but not from
a component containing a member of $C_1$ to one containing a member of $C_2$).  Since $D$
is eulerian of maximum out-degree $6$, there are at most $6n$ edges which cross the cut.
Since $C_1$ and $C_2$ are undirected cuts, for every edge that crosses $C$, there is an end
of that edge between $C_1$ and $C_2$ (inclusively).  Adding this end to $C$ for each edge
crossing $C$, we obtain an undirected cut of order less than $7n$ between $C_1$ and $C_2$,
contradicting (ii).
\end{proof}

\begin{thm}\label{findraw}Let $n$ be a positive integer.  There is a positive integer $N(n)$
such that if $D$ is a planar eulerian digraph of maximum out-degree $6$ with tree-width at
least $N$ there is a set of $n$ nested vertex-disjoint directed circuits oriented in the same
direction.  Moreover, there are $n$ vertex-disjoint directed paths from the inner circuit to
the outer and $n$ vertex-disjoint directed paths from the outer circuit to the inner. (The two
sets of paths need not be disjoint from one another).
\end{thm}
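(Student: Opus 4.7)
The plan is to combine the planar grid-minor theorem with Theorem~\ref{linkcuts}. Since $D$ is planar with large tree-width, by the Robertson--Seymour--Thomas planar grid theorem the underlying undirected graph contains a large grid minor, which in turn yields a long chain of pairwise disjoint nested undirected cycles $Z_1,\ldots,Z_M$ drawn in the plane. From this chain I would extract a sequence of $n+1$ pairwise disjoint nested undirected vertex cuts $C_0, C_1, \ldots, C_n$, each of size exactly $7n$, with the additional property that between any two consecutive cuts $C_i,C_{i+1}$ no undirected vertex cut has fewer than $7n$ vertices. The size bound follows because the bramble coming from the large grid forces every separator between far-apart $Z_i$'s to be large, while the minimality property between consecutive cuts is arranged by a standard uncrossing-plus-pigeonhole argument: start with many candidates and repeatedly insert a smaller cut between any pair that fails minimality.

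Next, apply Theorem~\ref{linkcuts} to each consecutive pair $(C_i,C_{i+1})$ to obtain $n$ vertex-disjoint directed paths from $C_i$ to $C_{i+1}$. Applying the same theorem to the reversal $D^{\mathrm{rev}}$, which is again planar and Eulerian with the same vertex cuts, yields $n$ vertex-disjoint directed paths from $C_{i+1}$ to $C_i$. The endpoints on each intermediate cut $C_i$ are abundant ($7n$ vertices, with only $n$ paths meeting it from each side), so by a Menger-type matching argument I can concatenate these local path systems into $n$ vertex-disjoint directed paths from $C_0$ to $C_n$ and $n$ in the reverse direction; these will serve as the required inner-to-outer and outer-to-inner linkages.

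For the $n$ nested directed circuits, consider for each $1 \le i \le n$ the annular region $A_i$ of the plane bounded by $C_{i-1}$ and $C_{i+1}$. This region contains $n$ outward paths and $n$ inward paths. Their union, closed up along the two boundary cuts by artificial edges pairing outward with inward endpoints, forms an Eulerian planar subdigraph whose net flow around the central hole is nonzero. Decomposing it into directed circuits (possible for any Eulerian digraph) and invoking a winding-number argument in $A_i$, at least one circuit winds nontrivially around the hole, hence separates $C_{i-1}$ from $C_{i+1}$. Choosing one such circuit per annulus gives $n$ nested directed circuits sitting in pairwise disjoint annuli. Orientation consistency across the annuli can be enforced by globally reversing the roles of the two path systems if needed, since in each annulus the winding circuit's direction is dictated by which of the two path systems contributes positive net winding.

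The main obstacle is this last step: producing a directed circuit of nonzero winding number from the bi-directional path union and ensuring consistent orientations across all $n$ annuli. The Eulerian decomposition must be performed on a carefully constructed subdigraph that includes the artificial closing edges, and the winding calculation needs the planar embedding to be used cleanly. Almost as delicate is the concatenation step, which relies on refining the cuts so that the matching of path endpoints on intermediate $C_i$'s is consistent across all levels; this refinement is exactly where the minimality hypothesis in the statement of Theorem~\ref{linkcuts} pays off.
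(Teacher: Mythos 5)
Your overall skeleton (undirected grid minor via the planar excluded-grid theorem, nested cuts, Theorem~\ref{linkcuts} for the two radial path systems) matches the paper's opening moves, but two of your steps have genuine gaps, and the second one is fatal to the proposal as written.

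First, the concatenation step. You take $n+1$ cuts with the no-small-cut property only between \emph{consecutive} pairs, get $n$ directed paths between each consecutive pair, and then claim a ``Menger-type matching argument'' splices them into $n$ paths from $C_0$ to $C_n$. But the paths from $C_i$ to $C_{i+1}$ end at some $n$ vertices of $C_{i+1}$ and the next system starts at a possibly disjoint set of $n$ vertices of $C_{i+1}$; an undirected vertex cut gives you no directed connectivity between these two sets, and even if some linking paths exist they must avoid all the other path systems simultaneously. Nothing in your setup provides this. The paper avoids the issue entirely: it chooses just \emph{two} cuts $S_i$ and $S_o$ by an extremal argument (minimizing over cuts of order at most $7n$ with a prescribed vertex outside the ``big side,'' first maximizing the small side, then minimizing the cut), shows directly that no cut of order less than $7n$ separates $S_i$ from $S_o$, and applies Theorem~\ref{linkcuts} once in each direction. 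If you want to salvage your version, you should aim for two far-apart cuts with no small separator between them rather than a chain of locally minimal cuts.

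Second, and more seriously, the directed circuits. You propose to extract them from the union of the $n$ outward and $n$ inward paths in each annulus, closed up by artificial edges and decomposed into circuits, with a winding-number argument. This cannot work: the union of an inner-to-outer path and an outer-to-inner path in an annulus may be two disjoint radial arcs containing no directed circuit at all, and any circuit of nonzero winding in your auxiliary Eulerian digraph may be forced to use the artificial closing edges, which are not edges of $D$; moreover the artificial edges can always be drawn so that the net winding is zero, so the ``nonzero net flow'' premise is simply false in general. You flag this as the main obstacle, but it is not a technical delicacy --- it is the wrong source for the circuits. The paper obtains the circuits from a completely different resource: the $100n$ \emph{undirected} circuits of the grid guarantee that every curve from the inner face to the outer face meets at least $100n$ vertices or edges; if there were not $13n$ clockwise (respectively counter-clockwise) vertex-disjoint directed circuits separating the two faces, one gets a dual curve crossed by at least $83n$ edges all in one direction while meeting fewer than $13n$ vertices, and combining the clockwise and counter-clockwise curves yields a closed curve crossed by more than six times as many like-oriented edges as it has vertices --- impossible in an Eulerian digraph of out-degree at most $6$. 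This second use of the Eulerian bounded-degree hypothesis is the heart of the proof and is absent from your proposal. (Your worry about orientation consistency across annuli also disappears in the paper's argument, since all $13n$ circuits are produced with a common orientation in one step.)
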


\begin{proof} In [], it is shown that an eulerian digraph of bounded out-degree has large
tree-width iff the underlying undirected graph has large tree-width.  This combined with
[Graph Minors 3 (?)] shows that for sufficiently large $N$ the undirected version of $D$
has an arbitrarily large undirected cylindrical grid as a minor. (An undirected cylindrical
grid is a cylindrical grid with no directions on the edges.) So we may choose $N$ so large
that $D$ has $120n$ circuits and $240n$ paths between the inner and outer circuit (without
taking minors).  Let the set of $120n$ circuits be $C$.  For any undirected vertex-cut $S$
in $D$ of order at most $7n-1$, there are at least $110n$ of the circuits in $C$ that contained
in the same component of $D-S$.  We call this component the \dfn{big side} of $S$.  Let
$s_i$ be a vertex on the inner-most circuit of $C$ and let $S_i$ be an undirected vertex cut
of order at most $7n$ with $s_i\notin S_i$ not in the big side of $S_i$, subject to that the
$s_i$ side of $S_i$ as large as possible, and subject to that $S_i$ as small as possible.
Let $s_o$ be a vertex on the outer-most circuit of $C$ and let $S_o$ be defined like $S_i$.
By construction $S_i$ and $S_o$ do not cross, $S_i$ is in the big side of $S_o$, and $S_o$
is in the big side of $S_i$.  Now, suppose $S$ is a vertex cut of order at most $7n$ separating
$S_i$ from $S_o$.  Then, $S$ lives in the big side of $S_i$ and $S_o$, and one of $S_i$,
$S_o$ is not in the big side of $S$.  We may assume that $S_i$ is not in the big side of $S$.
But then the big side of $S$ is a proper subset of the big side of $S_i$, and we have a
contradiction to the choice of $S_i$.  So, $S$ does not exist and $S_i$ and $S_o$ satisfy
the hypothesis of the previous theorem,.  Therefore, there are $n$ vertex-disjoint directed
paths from $S_i$ to $S_o$ and $n$ vertex-disjoint directed paths from $S_o$ to $S_i$.\\

One component of $D-(S_o\cup S_i)$ contains $100n$ of the circuits in $C$.  Let $H$ be $G$
restricted to this component but with the vertices in $S_i$ and $S_o$ added back. Let
$F_i$ be the face of $H$ containing $s_i$ in the superdigraph $D$ and similarly for $F_o$.
We may add at most $6n$ edges between the vertices in $S_i$ and at most $6n$ edges between
the vertices in $S_o$ to make $H$ Eulerian with maximum out-degree $6$ while remaining planar
(since $D$ is planar).  We let $F_i$ be any of the faces into which $F_i$ is split by adding
these edges and similarly for $F_o$.  If there are not $13n$ clockwise vertex-disjoint
directed circuits separating $F_i$ and $F_o$, there is a curve in the plane from inside $F_i$
to inside $F_o$ which intersects $H$ in fewer than $13n$ vertices and every edge of $H$ that
passes through the curve passes through in the counter-clockwise direction.  Since there are
at least $100n$ undirected circuits separating $F_i$ from $F_o$, we conclude that there are
at least $83n$ such edges.  We get a similar curve if there are not $13n$ counter-clockwise
circuits.  The union of these two curves can be partitioned into simple closed curves with
edges passing through them in only one direction.  There is one such closed curve with more
than six ($6*13<83$) times as many edges crossing it as the number of vertices on it.  As this is
impossible in an Eulerian digraph with maximum out-degree $6$, we may assume there are $13n$
clockwise circuits in $H$ separating $F_i$ from $F_o$.  So, there are at least $13n-2*6n$
clockwise circuits in $D$ separating $S_i$ from $S_o$.  Together with the $n$ paths from
$S_o$ to $S_i$ and the $n$ paths from $S_i$ to $S_o$ we have the desired structure.
\end{proof}


\section{Putting It All Together}

In this section we ``clean up" the paths and circuits we found in the previous section
to form the cylindrical grid.

\begin{thm}\label{getcylgrid} Let $n$ be a positive integer.  There is a positive
integer $N$ such that the following holds: Let $D$ be a digraph embedded in the plane
that consists of the union of $N$ nested counterclockwise directed circuits $C_1$,...,$C_N$
(nested in that order), $N$ vertex-disjoint directed paths contained between $C_1$ and
$C_N$ beginning on $C_N$ and ending on $C_1$, and $N$ vertex-disjoint directed paths
contained between $C_1$ and $C_N$ beginning on $C_1$ and ending on $C_N$.  $D$ has a
cylindrical grid butterfly-minor of size $n$.
\end{thm}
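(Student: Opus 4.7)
The plan is to normalize the interaction between the two path families and the nested circuits using the reroute lemma, to unroll the cylinder into a disk and apply the planar bubble-acyclic-grid machinery of the previous section, and finally to merge the outward and inward structures into a single cylindrical grid minor.

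First I apply Theorem~\ref{cylinderreroute} twice --- once to the outward family of paths together with the $N$ circuits $C_1,\ldots,C_N$, and once to the inward family together with the same circuits --- passing to a butterfly minor in which both families ``bubble'' against the common orientation of the $C_i$'s. Concretely, every minimal subpath of an outward (resp.\ inward) path between two visits to some $C_i$ now forms, together with the appropriate arc of $C_i$, a non-separating directed circuit in the annulus, so the rotational interactions have a consistent sign.

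Next I pick one outward path $P_0$ and cut the annulus between $C_1$ and $C_N$ along $P_0$. This unrolls the annulus into a topological disk in which $C_1$ and $C_N$ become the bottom and top boundary arcs, the remaining outward paths become vertex-disjoint ``verticals'' from bottom to top, and each $C_i$ becomes a directed arc across the disk (a ``horizontal''). The bubbling property from the previous step is precisely the statement that these horizontals and verticals hit in reverse, so I apply Theorem~\ref{getagrid2} (with $N$ chosen large enough) to pull out a bubble acyclic grid of any preassigned size $M$, picking $M$ of the original circuits $C_{i_1},\ldots,C_{i_M}$ and $M$ outward paths that attach to them monotonically. Repeating this with the inward family, cut now along a fixed inward path, gives a second bubble acyclic grid of size $M$ on inward paths.

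Finally I merge. Taking $M$ polynomial in $n$ and $N$ sufficiently larger, a pigeonhole argument lets me restrict the two independently obtained grids to a common set of $n$ circuits, $n$ outward paths, and $n$ inward paths. On each of these $n$ circuits the bubbling property from Step~1 forces the outward and inward attachment points to lie in two separate arcs, while the monotonicity of the two acyclic grids forces the cyclic orders of those attachment points to match from one circuit to the next. Butterfly-contracting each arc of each circuit between consecutive attachment points then yields, on each chosen circuit, a directed circuit of length $2n$ whose first $n$ vertices are the outward rungs and whose last $n$ are the inward rungs, with vertex labels aligned across successive circuits --- exactly the cylindrical grid of size $n$. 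The main obstacle is this final merge step: the two bubble acyclic grids are built independently and can be badly misaligned a priori, and only the combination of the rotational control from Theorem~\ref{cylinderreroute} (forcing per-circuit segregation of the two families) with the monotonicity from Theorem~\ref{getagrid2} (forcing cyclic alignment between successive circuits), plus one last Ramsey/pigeonhole pass over the abundant supply of circuits afforded by taking $N$ very large, allows the two independently chosen structures to be synchronized into a single cylindrical grid minor.
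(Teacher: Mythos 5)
There is a genuine gap: your argument implicitly assumes that the outward and inward path families interact only through the circuits, and the merge step collapses exactly where they interact with each other. Theorem~\ref{cylinderreroute} controls the winding of one family of $T$--$B$ paths against the family of circuits; it says nothing about how an in-path meets an out-path. Your claim that the rerouting ``forces the outward and inward attachment points to lie in two separate arcs'' of each circuit is therefore unsupported, and it is false in general: an in-path and an out-path may share many vertices, and their attachment points on a given $C_i$ can interleave arbitrarily. The paper isolates precisely this issue by a bipartite Ramsey dichotomy on the ``outer parts'' of the two families (the portions outside $C_{N/2}$): either many in-paths and out-paths have pairwise disjoint outer parts, or many have pairwise intersecting outer parts. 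Your proposal is essentially the paper's first case (where one does get the segregation into arcs $I_k$ and $O_k$, builds two bubble acyclic grids via Theorem~\ref{getagrid2}, and reroutes with compensating inward/outward shifts so the circuits still close up). The second case requires a completely different construction --- cutting the annulus along a hybrid path $p$ built from a middle in-path and a middle out-path, introducing phantom edges, classifying ``deviant'' sub-paths that wrap around $C_1$, applying Theorem~\ref{getagrid} with the in-paths as verticals and the \emph{out-paths} (not the circuits) as horizontals, and then using the untouched inner circuits $C_1,\dots,C_{N/2}$ to link the resulting acyclic grid into a cylinder. None of this is recoverable from your pigeonhole merge.

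A secondary problem: even in the disjoint case, your final step of ``butterfly-contracting each arc of each circuit between consecutive attachment points'' does not dispose of the bubbling. A vertical path in a bubble acyclic grid may revisit a horizontal (here, a circuit) several times, and the standard fix is to reroute the horizontals through the verticals; since the horizontals are arcs of circuits, rerouting threatens to prevent the circuits from closing up. The paper handles this by performing the two reroutings with equal and opposite radial shifts, a point your sketch would need to address explicitly.
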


\begin{proof}
We call the paths that begin on $C_N$ the \dfn{in-paths} and those that begin on $C_1$
the \dfn{out-paths}.  We may assume that all paths only intersect $C_1$ and $C_N$ at
their terminals.  We begin by applying \rft{cylinderreroute}.  For each in-path, let
the \dfn{outer part} be the sub-path that has the same start as the path intersects
$C_{N/2}$ only at its end.  For each out-path, let the \dfn{outer part} be the sub-path
that has the same end as the path and intersect $C_{N/2}$ only at its start.  By
bipartite Ramsey, either there are many in-paths and many out-paths with vertex-disjoint
outer parts or many in-paths and many out-paths with pairwise intersecting outer parts.
Since we are free to choose $N$ as large as necessary, we may assume that the original
set of in-paths and out-paths either have disjoint or intersecting outer parts as
described.  We consider the two cases separately.\\

Suppose first that the in-paths and out-paths have disjoint outer parts.  We ignore the
circuits $C_1$, ..., $C_{N/2-1}$ and restrict the in and out paths to their outer parts.
We further restrict attention to an arbitrary $3n-1$ of the circuits.  For any circuit
$C_k N/2<k<N$ and an in-path $p$, $p$ may not contain a sub-path $p'$ that begins and
ends on $C_k$ such that the union of $p'$ and the sub-path of $C_k$ that begins at the
end of $p'$ and ends at the beginning of $p'$ separates $C_{N/2}$ from $C_N$. (In other
words, $p$ may not ``wrap around" $C_k$.) This can be seen by considering any out-path,
using the fact that the in-paths and out-paths are disjoint, and recalling the property
supplied by \rft{cylinderreroute}.  Similarly, the out-paths do not wrap around $C_k$.\\

For each circuit $C_k N/2<k<N$ and an in- or out-path $p$, we define the \dfn{interval} of
$p$ on $C_k$ as the directed sub-path of $C_k$ that begins at the last vertex $p$
shares with $C_k$ and ends at the first vertex $p$ shares with $C_k$.  Since the paths
do not wrap around $C_k$, all vertices $p$ shares with $C_k$ occur in the interval.
Again by applying disjointness and the property of \rft{cylinderreroute}, the
intervals of any in-path and any out-path must be disjoint.  Furthermore, by removing
some of the in- and out-paths, we may assume that each $C_k$ can be partitioned into
two sub-paths $I_k$ and $O_k$ such that all the in-path intervals are in $I_k$ and
all the out-path intervals are in $O_k$.\\

We apply \rft{getagrid2} twice, first with the set of in-paths as the verticals and the
set of all $I_k$ as the horizontals and then with the set of out-paths as the verticals
and the set of all $O_k$ as the horizontals.  (Due to the relative orientation of the
horizontals and verticals, \rft{getagrid2} is only applicable in the case of the out-paths,
but the necessary lemma for the in-paths holds with the same proof.) If \rft{getagrid2}
produced acyclic grids instead of bubble acyclic grids, the result would be a cylindrical
grid of size at least $n$ for sufficiently large $N$.\\

In general, to get an acyclic grid from a bubble acyclic grid, we re-route the horizontal
paths in the typical fashion.  We start at the beginning of the lowest horizontal path,
follow it to the first intersection with a vertical path, follow that vertical path to the
next horizontal path, and continue until we have traveled along all vertical paths.  We
then start on the third lowest horizontal path and repeat the procedure.\\

In our particular case, before we re-route we restrict our attention to $n$ of the
vertical paths in the bubble acyclic grid produced using the in-paths and similarly
in the bubble acyclic grid produced using the out-paths.  We then produce acyclic
grids of size $n$ by performing the above procedure. ($3n-1$ is the number of
horizontals needed to be able to perform the re-routing procedure $n$ times.)  Since
the horizontal paths are actually sub-paths of circuits, re-routing them is dangerous
because the circuits may no longer ``close up." However, we shift in when re-routing
the horizontals in the in-path grid and shift out by the same amount when re-routing
in the out-path grid.  So, we succeed in producing a cylindrical grid of size $n$ in
this case.\\

We next suppose that the in-paths and out-paths have intersecting outer parts.  By
removing some in- and out-paths, we may assume that the in-paths start in a sub-path
of $C_N$ that does not contain the end of any out-path.  We may then speak of the
``middle" in-path and out-path.  The outer parts of these two paths intersect, and
we define the path $p$ by starting at the middle in-path, traveling until we encounter
a vertex on the outer part of the middle out-path, and then following the
middle out-path to its end.  Since $p$ is a subset of outer parts, it does not intersect
$C_{N/2}$.  In particular, $C_1$ is contained on one side of $p$ (viewing $C_N$ as
bounding a disk and $p$ as separating that disk) and we assume without loss of generality
in our future argument that the directed circuit formed by $p$ and a sub-path of $C_N$
does not contain $C_1$.\\

We construct a new digraph $H$ embedded in a disk.  The boundary of the disk will be
$C_N$.  Let the sub-path of $C_N$ that forms a directed circuit with $p$ be $q$.
Half of the in-paths and half of the out-paths have terminals on $q$, and we restrict
our attention to these (including the middle in-path and out-path).  We begin the
construction by retaining all sub-paths of in- and out-paths that are contained inside
the directed circuit formed by $p$ and $q$. (An in- or out-path may have many such sub-paths.)
In $D$, an in- or out-path may have a sub-path that starts on $p$, goes around
$C_1$ and returns to $p$.  We call such a sub-path and the path that contains it
\dfn{deviant}.  We have four cases based on whether at least half the in-paths are not deviants
and whether at least half the out-paths are not deviants.  All cases are handled nearly
identically, and we start with the case that at least half the out-paths are not deviants
and half the in-paths are not deviants.  By restriction, we may assume that all in-paths
and out-paths are not deviants.\\

We currently only have a set of disjoint sub-paths in $H$ for each of the in- and
out-paths under consideration.  However, since there are no deviants, we may add an edge
to $H$ representing each missing sub-path that starts and ends on $p$ (having the same
start and end as the sub-path) and embed these edges in the disk on the same side of $p$
as $C_1$. (No deviants means we may topologically ``pull apart" the sub-paths of the in-paths
from those of the out-paths.)  For the sub-paths that start (or end) on $p$ but have
their other terminal on $C_1$, we replace them by an edge starting (or ending) at the same
place on $p$ and terminating on a new vertex on the boundary of the disk.  This can be done
in such a way that the starts of the in-paths, the starts of the out-paths, the ends of
the in-paths, and the ends of the out-paths occur in that counter-clockwise order around
the boundary of the disk.  We call the edges we've added to represent sub-paths
\dfn{phantom} edges.\\

We next apply \rft{getagrid} with the in-paths as the verticals and the out-paths
as the horizontals.  By properties $1$ and $2$ of that lemma, each vertical path ends in a
phantom edge ending on $C_N$ and each horizontal path begins with a phantom edge beginning on
$C_N$.  Moreover, once a vertical path uses a phantom edge, it hits no more horizontals and
once a horizontal hits a vertical, it uses no more phantom edges.  The presence of such a grid
implies the existence of an acyclic grid minor in $D$ that is completely contained
outside $C_{N/2}$ except for the starts of the horizontal paths and the ends of the
vertical paths which start and end respectively on $C_1$.  Using the circuits $C_1$, ...,
$C_{N/2}$ it is then easy to find an acyclid grid and a set of vertex-disjoint directed paths
(from each other and the grid) linking the ends of the verticals to the starts of the
horizontals.  We may make this grid arbitrarily big by choosing a large enough $N$, and a
sufficiently big \dfn{linked acyclic grid} of this kind contains the cylindrical grid of size
$n$.\\

We next consider the case when at least half the in-paths and half the out-paths are
deviant.  As usual we may then assume that all in- and out-paths are deviant.  We
proceed as before and add edges representing the missing sub-paths, but only do so
for sub-paths of the in-paths before the first deviant sub-path and for sub-paths of the
out-paths after the last deviant sub-path.  With this restriction, we may add the phantom
edges as before to maintain the embedding of $H$ in the disk.  For the first deviant
sub-path on an in-path we add a new edge with the same start as the deviant sub-path
and with end on the boundary of the disk.  For the last deviant sub-path on an out-path
we add a new edge with the same end as the deviant sub-path and starting on the boundary
of the disk.  Since we are replacing the first and last deviant sub-paths, we may add
these edges in such a way that the terminals of the in- and out-paths we construct appear
around the boundary of the disk in the same order as in the previous case.  Applying
\rft{getagrid} and translating the result back to $G$ gives an acyclic grid with
vertical paths that end with deviant sub-paths and horizontal paths that begin with
deviant sub-paths.  Since every deviant horizontal sub-path hits every deviant vertical
sub-path, we may once again find a linked acyclic grid and complete the proof.\\

The two mixed cases are handled by combining the arguments for the first two cases.
Deviant paths are truncated and then represented by phantom edges while non-deviant
ones are represented by these edges without truncation.  We may find the linked acyclic
grid in $D$ since every deviant sub-path hits every sub-path with terminals on $p$ and
$C_1$.
\end{proof}


\begin{thebibliography}{99}
\bibitem{JohRobSeyThoDirtree}
T.~Johnson, N.~Robertson, P.~D.~Seymour and R.~Thomas,
Directed tree-width,
{\it J.~Combin.\ Theory Ser.\ B} {\bf 82} (2001), 138--154.
\end{thebibliography}
\end{document}